\newtheorem{theorem}{Theorem}[section]
\newtheorem{lemma}[theorem]{Lemma}
\newtheorem{corollary}[theorem]{Corollary}
\newtheorem{proposition}[theorem]{Proposition}
\theoremstyle{definition}
\newtheorem{definition}[theorem]{Definition}
\theoremstyle{remark}
\newtheorem{remark}[theorem]{Remark}
\numberwithin{equation}{section}
\theoremstyle{noparens}
\newtheorem*{question*}{Question}
\newtheorem*{theorem*}{Theorem}
\newcommand{\uhr}{{\upharpoonright}}
\title{Upper bound on some hightness notions}
\author{Lu Liu}
\address{Department of Mathematics,
Central South University,
City Changsha, Hunan Province,
China. 410083}
\email{g.jiayi.liu@gmail.com}
\subjclass[2010]{Primary  03D80; Secondary 68Q30 03D32 }
\keywords{computability theory, algorithmic randomness theory,  reverse math, martingale}
\begin{document}
\def\DC{DC}
\def\wt{\text{wt}}
\def\h{\hat}
\def\dominate{dominate}
\def\t{\tilde}
\def\mcal{\mathcal}
\def\m{m}
\def\msf{\mathsf}
\def\S{S}
\def\majorize{majorize}
\def\geqs{\geq_s}
\def\v{\vec}

\maketitle

\begin{abstract}
We give upper bound for several
highness properties in computability randomness
theory.
First, we prove that
discrete covering property does not imply
the ability to compute a 1-random real,
answering a question of Greenberg, Miller and Nies.
This also implies that an infinite  set of incompressible strings
 does not necessarily extract a  1-random real.
Second, we
prove that given a homogeneous binary tree that does not admit
an infinite computable path, a sequence of bounded martingale
whose initial capital tends to zero,
 there exists
a martingale $S$ majorizing infinitely many of
them such that $S$ does not compute an infinite path
of the tree. This implies that 1)
 High(CR,MLR) does not imply PA-completeness,
 answering a question of Miller;
2) $\leq_{\msf{CR}}$ does not imply $\leq_T$,
answering a question of Nies.
 The proof of the second
 result suggests that the coding power
 of the universal c.e. martingale
 lies in its infinite variance.

\end{abstract}

\section{Introduction}

The relation between different complexity notion,
characterize the complexity of a given object
are central topic of mathematical logic.
It has been established that many lowness
(for random) notion 
coincide.
Therefore we naturally wonder:
does a similar picture shows up for highness notions?
Our goal is to  compare the following two pairs of highness notions:
1) PA degree (the Turing degree that compute
a complete Peano Arithematic)
versus High(CR,MLR), the oracle  relative
to which every computably random real is
1-random; 
2) the Turing degree that compute a 1-random real
versus
 the discrete covering property.
 In each case, we separate the former notion 
 from the later one.

In computability theory,
there are several ways to say
an object is of low complexity.
Let $\mcal{C}\subseteq 2^\omega$
be a relativizable complexity notion,
if $\mcal{C}^A=\mcal{C}$, then this is
seen as an evidence that $A$ is of low complexity.
For example, let $\mcal{C}^A$  be
1-random relative to $A$, then
$\mcal{C}^A=\mcal{C}$ means every 1-random real
is still 1-random relative to $A$ (low for 1-random).
The second way is by Kolmogorov complexity.
$A$ is $K$-trivial iff
$\forall n K(A\uhr_n)\leq K(n)+c$ for some constant $c$.
The third way is to say that some robust
object compute $A$. For example,
 $A$ is a base for ML-randomness if
 $A$ is Turing reducible to some 1-random
 relative to $A$ real.
Nies\cite{nies2005lowness}
and Hirschfeldt, Nies and Frank
\cite{hirschfeldt2007using} showed that these complexity notions coincide 
(see \cite{Nies2009Computability}\cite{Downey2010Algorithmic} for
background knowledge of computability randomness theory).

Highness notion
can be described in a similar fashion as lowness notion
but in an opposite  direction.
For example, in contrast with low for 1-random
is the ability to compute some 1-random.
A weaker notion in this respect is that $A$
has more efficient way to describe non-1-random real
than the computable degree.
In this aspect, Franklin, Stephan, and Yu
\cite{franklin2011relativizations} studied the following highness
notion.
An oracle $D$ is \emph{high for computable
randomness vs ML-randomness}
if every real that is computably random relative
to $D$ is $1$-random.
We denote this property as High(CR,MLR).
It it known that  High(CR,MLR) implies
the ability to compute a 1-random real \cite{franklin2011relativizations}.
While conversely, computing a 1-random real is far from
being High(CR,MLR) (see \cite{greenberg2019highness} figure 1).
Meanwhile, \cite{franklin2011relativizations} observed
that every PA degree is
 High(CR,MLR). Most of highness properties
 of this type, except High(CR,MLR), has been well understood.
A fundamental question about High(CR,MLR) by Miller,
is that whether there is
an actual gap between PA completeness and High(CR,MLR)
(see \cite{Nies2014logic}).
Given a set $D\subseteq 2^{<\omega}$,
in this paper,
a \emph{martingale}
(\emph{supermartingale} resp)  on $D$ is a function
$\S:D\rightarrow \mathbb{R}^{\geq 0}$
such that for every $\rho\in D$ with $\rho0,\rho1\in D$,
$2\S(\rho) =
(\geq \text{resp}) \S(\rho 0)+\S(\rho 1)$.
A nice characterization of High(CR,MLR)
given by Kastermans, Lempp, and Miller
(or Bienvenu and Miller \cite{bienvenu2012randomness}) is that
$D\in $High(CR,MLR) if and only if
$D$ compute a martingale $S$ succeeding on every non-1-random
 where
$S$ succeed on $X\in 2^\omega$ if
$\limsup\limits_{n\rightarrow\infty}S(X\uhr_n)=\infty$.
In Theorem \ref{dcth0} we construct such a martingale
that does not compute any PA degree, thus answer the question
positively.
On the other hand, \cite{greenberg2019highness} proved
that every $D\in $High(CR,MLR) compute
a DNR$_h$ where $h$ is some slow growing
computable increasing function. Thus the current upper bound and
lower bound of High(CR,MLR) are pretty close.
We say $C$ is Low for computable random
iff  every computably random real is computably random relative to $C$.
 Relativizing a lowness notion give rise to a 
 reducibility notion.
 We write $C\leq_{\msf{CR}} D$
if $\msf{CR}^D\subseteq \msf{CR}^C$ where
$\msf{CR}^A$ is the set of
computably random reals relative to $A$.
Nies \cite{nies2005lowness} proved that
 $C$ is  Low for computable random if and only if
 $C$ is computable. 
 Therefore,
Nies \cite{Nies2014logic} naturally asked whether $\leq_{\msf{CR}}$ implies
$\leq_T$. Miyabe confirms  the analog
 question for Schnorr randomness.
 We show that this is not the case
 for computable randomness (Corollary \ref{dccoro1}),
 answering the question negatively.

Another type of highness notion is the ability to compute
the solution of certain combinatorial object. PA completeness
falls in this category since $A$ is of PA degree if and only if
for every computable infinite binary tree
$T$, $A$ compute an infinite path through $T$. From the view of
reverse math,
the tree $T$ is an instance, an infinite path through $T$ is
called its solution.
 Greenberg,Miller and Nies \cite{greenberg2019highness} defined a highness notion
in a similar fashion.
Given a set $X\subseteq \omega\times\omega$,
the \emph{weight} of $X$, written as $\wt(X)$,
is $\sum\limits_{(n,m)\in X}2^{-n}$.
The discrete covering property is defined as following.
\begin{definition}[\cite{greenberg2019highness}]
An instance of \emph{Discrete Covering}
(henceforth $\msf{\DC}$) is a
set $X\subseteq \omega\times\omega$
such that $\wt(X)<\infty$.
A $\msf{\DC}$ solution to $X$ is
a set $Y\supseteq X$ such that $\wt(Y)<\infty$.
An oracle $D$ has the \emph{discrete covering property}
if $D$ compute a solution to every
c.e. $\msf{DC}$ instance.

\end{definition}
They
asked if an oracle $D$ compute $\msf{\DC}$
solution to all c.e. $\msf{\DC}$ instance,
then does $D$ necessarily compute a  $1$-random
real?
According to the experience in reverse math,
two naturally arised combinatorial notion
could hardly coincide with each other.
It is even harder to approach $\msf{WWKL}$
(the ability to compute 1-random real)
with combinatorial notion
involving subset.
Where the $\msf{DC}$ problem involves subset notion
since $X$ is a solution and $Y\supseteq X\wedge \wt(Y)<\infty$
implies $Y$ is also a solution.
 Chong,Li,Liu,Yang\cite{chong2019strength} shows that even the tree theorem for pairs
 (which incorporate tree combinatorics in itself) does not imply
 the Weak Weak K\"{o}nig's Lemma.
Therefore it is foreseeable that discrete covering
does not imply the ability to compute 1-random real
which is confirmed in Theorem \ref{dcth1}.
An interesting note is that a
discrete covering oracle compute
an infinite set of incompressible strings
(see Proposition \ref{dcprop0}).
Thus together with Theorem \ref{dcth1} it means
that an infinite set of incompressible string
does not necessarily extract a 1-random real.
Another consequence is that discrete covering property
does not imply (strong) continuous
covering property since the latter implies
computing a 1-random real (see Corollary \ref{dccoro0}).

It is interesting to wonder if we can view
High(CR,MLR) as a reverse math problem.
To say that, it usually means 1) The set of instance
is a closed set (in most cases an effectively closed set);
2) Given an instance $X$, the solution to $X$
is "almost" a $\Pi_1^{0,X}$ class.
For example, given an infinite tree $T$,
the set of infinite path through $T$
is a $\Pi_1^{0,T}$ class uniformly in $T$.
But given a coloring of pairs $C:[\mathbb{N}]^2\rightarrow k$,
the collection of infinite homogeneous set of $C$
is not $\Pi_1^{0,C}$, but a $\Pi_1^{0,C}$ class
intersects with countably many densely open set.
Note that the set of martingale $S$ with  $S(\bot)\leq c$ is a
$\Pi_1^{0,c}$ class uniformly in $c$. Therefore,
can we define the majorize martingale problem
(henceforth $\msf{MM}$ in this paper) as following?
The instance of $\msf{MM}$ is a martingale $S$,
the solution to $S$ is a martingale $S^*$ such that
$S^*\geq S$. However, this restriction lose the essence
of High(CR,MLR). \cite{greenberg2019highness} shows that every
martingale majorizing the optimal c.e. submartingale
compute a PA degree.
On the other hand, we guess that this coding power of
the optimal c.e. submartingale lies in its infinite
variance. i.e., We guess that for every
martingale
(not necessarily c.e.) with bounded variance (but not necessarily bounded)
admit a majorization that does not compute any PA degree
(see section \ref{dcth0}).
We deal with discrete covering property
in section \ref{dcsec0}
and High(CR,MLR) in section \ref{dcsec1}.

\textbf{Notations.}
We write $(\Psi^\tau\uhr_N)\downarrow$ if
$\Psi^\tau(n)\downarrow$ for all $n\leq N$;
and $(\Psi^\tau\uhr_N)\uparrow$ if $\Psi^\tau(n)\uparrow$
for some $n\leq N$.
For a string $\rho \in 2^{<\omega}$, we let $[\rho]^{\preceq}=\{\sigma\in 2^{<\omega}: \sigma\succeq \rho\}$,
$[\rho] = \{X\in 2^\omega:X\succ \rho\}$;
similarly, for $V\subseteq 2^{<\omega}$, let $[V]^{\preceq} = \{\sigma\in 2^{<\omega}:
 \sigma\succeq \rho\text{ for some }\rho\in V\}$,
 $[V] = \{X\in 2^\omega:X\succ\rho\text{ for some }\rho\in V\}$.
For $X\in 2^{<\omega}\cup 2^\omega$, let
$X\uhr_n = X(0)\cdots X(n-1)$, $X\uhr_n^m = X(n)X(n+1)\cdots X(m)$;
for $Q\subseteq 2^\omega\cup 2^{<\omega}$, $Q\uhr_n = \{X\uhr_n:X\in Q\}$.
We use $\bot$ to denote the empty string.
For an effective object such as a c.e. set $D$, a $\Pi_1^0$ class
$Q$, we write $D[t],Q[t]$ to denote the object computed by time $t$.
For a set $V\subseteq 2^{<\omega}$,
we write $\m(V)$ to denote the Lebesgue measure of $[V]$
in $2^\omega$.

\section{Discrete covering vs 1-randomness}
\label{dcsec0}

Generally speaking, a problem $\msf{P}$ admit
$\Pi_1^0$ class avoidance of a property, if every
non empty $\Pi_1^0$ class of $\msf{P}$-instance
admit a member and a solution $G$ of that member
such that $G$ avoid that property. Note that
a c.e. $\msf{DC}$ instance $D$ with, say
$\wt(D)<1$, can be approximated
by a non empty $\Pi_1^0$ class in the following way:
$Q = \{X:\text{ For every }t, X\supseteq D[t]
\text{ and }\wt(X)\leq 1\}$. Clearly $D$ itself
is a member of $Q$ and for every
$X\in Q$, every $\msf{DC}$ solution $Y$ of $X$,
$Y$ is a solution to $D$.
Our Lemma \ref{dclem2}
actually shows  that $\msf{DC}$ problem admit $\Pi_1^0$
class avoidance of $1$-randomness.
We write $X\subseteq^* Y$
if $X\setminus Y$ is finite.
\begin{theorem}\label{dcth1}
There exists a $G\subseteq \omega\times\omega$
with $\wt(G)<\infty$
such that for every c.e. $\msf{\DC}$ instance
$D$, $D\subseteq^* G$ and $G$
does not compute any $1$-random real.
\end{theorem}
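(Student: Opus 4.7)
The plan is to build $G$ by a forcing argument whose conditions are nonempty $\Pi_1^0$ classes of bounded-weight subsets of $\omega\times\omega$, satisfying three families of requirements in parallel: (i) the weight of $G$ stays bounded; (ii) $D_e\subseteq^* G$ for each c.e.\ $\msf{DC}$ instance $D_e$; (iii) $\Phi_e^G$ is not $1$-random for each Turing functional $\Phi_e$. The heart of the proof is Lemma~\ref{dclem2}, the $\Pi_1^0$ class avoidance of $1$-randomness alluded to before the theorem statement; once this is in hand, a routine interleaving of the requirement types yields $G$.

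To handle (ii) while respecting (i), enumerate the c.e.\ instances $D_0,D_1,\ldots$. At the stage assigned to $D_e$, wait until the enumerated portion of $D_e$ exhibits an apparent weight bound $c_e$ (if it never does, $D_e$ is not a valid instance and nothing needs to be done). Pick a height $N_e$ so large that the tail $D_e\cap([N_e,\infty)\times\omega)$ has weight at most $2^{-e}$, and strengthen the current condition by forcing $G$ to contain this tail. Only the finite piece $D_e\cap([0,N_e)\times\omega)$ may be omitted, so $D_e\subseteq^* G$, and summing $2^{-e}$ over $e$ keeps the total weight of $G$ finite.

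For (iii), apply the avoidance lemma: given the current condition $Q$ and functional $\Phi_e$, one strengthens $Q$ to $Q'$ so that for every $G\in Q'$, $\Phi_e^G$ is either partial or not $1$-random. The natural approach is measure-theoretic: equip $2^{\omega\times\omega}$ with a computable product measure $\mu$ under which the expected weight is finite (e.g.\ include $(n,m)$ independently with probability $p_{n,m}$ satisfying $\sum_{n,m}p_{n,m}2^{-n}<\infty$); then the $\Pi_1^0$ class of bounded-weight instances has positive $\mu$-measure. If some strengthening of $Q$ forces $\Phi_e^G$ to be partial, take it; otherwise, one shows by a Kjos-Hanssen-style measure argument that the range of $G\mapsto\Phi_e^G$ is covered by an ML-test $\{U_n\}$, and iteratively strengthens $Q$ to drive $\Phi_e^G$ into $\bigcap_n U_n$. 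The main obstacle is executing this lemma cleanly: because $\msf{DC}$ solutions are upward-closed (any superset of a finite-weight solution is again a solution), the standard Lebesgue measure on $2^\omega$ is not the right tool, and one must design the weighted product measure so that the estimates needed for a Kjos-Hanssen-type bound survive the coexistence with requirement (ii)'s forced enlargements of $G$.
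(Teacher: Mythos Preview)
Your overall framework---forcing with $\Pi_1^0$ classes of bounded-weight sets, absorbing the tail of each $D_e$ at cost $2^{-e}$---matches the paper's. The gap is entirely in part (iii), and you yourself flag it without closing it.

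A Kjos-Hanssen-style pushforward-measure argument does not go through here, and not merely for want of the right product measure. The admissible $G$'s in a condition are not a fixed positive-measure set on which $\Phi_e$ behaves uniformly: a condition in the paper is (essentially) a pair of a $\Pi_1^0$ class $Q$ and a slack $a$, and a candidate $G$ is any $Y\supseteq X$ for some $X\in Q$ with $\wt(Y\setminus X)\le a$. The freedom to add up to weight $a$ of \emph{arbitrary} points means $\Phi_e^G$ can range over a set with no useful measure bound, so the usual ``if the range has positive measure then force into a small open set'' dichotomy has no traction. No choice of product measure on $2^{\omega\times\omega}$ repairs this, because the upward closure is with respect to inclusion, not with respect to any product structure.

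The paper's proof of Lemma~\ref{dclem2} abandons measure on the forcing side altogether. The invariant is $\wt(Q)=\inf_{X\in Q}\wt(X)$. For $V\subseteq 2^n$ one defines $Q(V,a)$, the class of $Y\supseteq X\in Q$ such that no weight-$a$ enlargement of $Y$ computes into $V$; failure of a $\Sigma_1^0$ force means $\wt(Q(V^*,a))\le \wt(Q)$. The key combinatorial step (Lemma~\ref{dclem1}) is: if no $\Pi_1^0$ force is available and $\wt(Q(V,a))$ is large, then there is $V'$ with $\m(V')<\tfrac{2}{3}\m(V)$ and $\wt(Q(V',a+2\Delta\lambda))$ still large. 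This comes from showing that, for each relevant $\rho$, the family $\{V':\wt([\rho]\cap Q(V',\cdot))\ge \lambda+\Delta\lambda\}$ is $2$-large and upward closed in $[V]^\preceq\cap 2^{n'}$, and then invoking a Hamming-distance concentration fact (Corollary~\ref{dclemcomb2}) to extract a common $V'$ of density below $2/3$. Iterating yields a c.e.\ $V^*$ with $\m(V^*)<\varepsilon$ whose enumeration drives $\wt(Q)$ strictly upward at each step, which is impossible past $\wt(Q)<1$; the stall point produces the desired extension. This shrinking-via-partition-largeness mechanism is the missing idea in your proposal.
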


An oracle $D$ has
\emph{(strong) continuous
covering property} if for every
computable tree $T$ with
$\m([T])>0$ there exists a $D$-computable
subtree $T'$ of $T$ with no leaf such that
$\m([T'])>0$.
\cite{greenberg2019highness}
 proved that
(strong) continuous
covering property implies computing a 1-random
real. Thus we have:

\begin{corollary}\label{dccoro0}
Discrete covering property does not imply
(strong) continuous
covering property.
\end{corollary}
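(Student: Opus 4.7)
The plan is to read the corollary off directly from Theorem \ref{dcth1} together with the cited implication of \cite{greenberg2019highness} that the strong continuous covering property entails computing a 1-random real. Since the substantive work is already contained in Theorem \ref{dcth1}, there is essentially no obstacle; the only point that deserves verification is that the witness $G$ produced by that theorem really does have the discrete covering property, given that the theorem only asserts $D\subseteq^* G$ rather than $D\subseteq G$.

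So first I would fix $G$ as in Theorem \ref{dcth1} and check the discrete covering property. Given an arbitrary c.e.\ $\msf{DC}$-instance $D$, the theorem yields that $F:=D\setminus G$ is finite; being finite, $F$ is computable, hence $G\cup F$ is $G$-computable. Moreover $G\cup F\supseteq D$ and $\wt(G\cup F)\leq \wt(G)+\wt(F)<\infty$, so $G\cup F$ is a $\msf{DC}$-solution to $D$. Since $D$ was arbitrary, $G$ has the discrete covering property.

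Next I would invoke the implication from \cite{greenberg2019highness}: every oracle with the strong continuous covering property computes a 1-random real. By Theorem \ref{dcth1}, $G$ computes no 1-random real, so $G$ cannot have strong continuous covering. Combining the two observations, $G$ witnesses the non-implication, and the corollary follows.

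Thus the proof is essentially a one-line bookkeeping argument on top of Theorem \ref{dcth1}; the only thing to be careful about is the finite-difference gap between $D\subseteq^* G$ and an actual containment, which is resolved by absorbing the finite exceptional set into the solution.
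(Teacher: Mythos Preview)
Your proposal is correct and follows exactly the paper's approach: the paper simply states that \cite{greenberg2019highness} proved strong continuous covering implies computing a 1-random real, and leaves the rest to Theorem~\ref{dcth1}. Your added observation that the finite set $F=D\setminus G$ can be absorbed into a $G$-computable solution is precisely the small detail the paper takes for granted.
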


An interesting observation is that:
\begin{proposition}\label{dcprop0}
There is a c.e. $\msf{DC}$ instance
$D$ such that every $\msf{DC}$ solution
$ X$ of $D$ compute an infinite
set $W$ of incompressible strings.
i.e., $W\subseteq \{\rho:[\rho]\nsubseteq [U_n]\}$
for some $n$
where $(U_n:n\in\omega)$ is the universal ML-test.
\end{proposition}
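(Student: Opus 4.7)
The plan is to reduce this proposition to the classical theorem that DNR functions compute infinite sets of incompressible strings. I would take
\[ D = \{(e, \Phi_e(e)) : \Phi_e(e) \downarrow\}, \]
where $(\Phi_e)_{e \in \omega}$ enumerates the partial computable functions. Then $D$ is c.e., and $\wt(D) \leq \sum_{e \geq 0} 2^{-e} < \infty$, so $D$ is a legitimate c.e.\ $\msf{DC}$ instance.

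For any $\msf{DC}$ solution $X \supseteq D$ with $\wt(X) < \infty$, the slice $X_e := \{m : (e,m) \in X\}$ has size $|X_e| \leq 2^e \wt(X) < \infty$. Define $f(e) := \min(\omega \setminus X_e)$; this is $X$-computable, and whenever $\Phi_e(e) \downarrow$ we have $(e, \Phi_e(e)) \in D \subseteq X$, so $f(e) \neq \Phi_e(e)$. Hence $f$ is a DNR function computable from $X$.

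To conclude, I appeal to the classical theorem of Kjos-Hanssen (cf.~\cite{Nies2009Computability}): every oracle computing a DNR function also computes an infinite antichain in the $\Pi_1^0$ tree $I_n := \{\rho : [\rho] \nsubseteq [U_n]\}$ for some $n$ depending on the oracle. Applied to the $X$-computable $f$, this produces an $X$-computable infinite $W \subseteq I_n$, which is the desired infinite set of incompressible strings.

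The main obstacle is the last step: the passage from DNR to an infinite incompressible set. Cited as a black box the proof is immediate, but a self-contained argument would use $f$ to diagonalize at each length against the c.e.\ enumeration of strings of small prefix-free Kolmogorov complexity, producing pairwise-incomparable survivors at a sparse sequence of lengths. The resulting level $n$ depends on the growth rate of $|X_e|$, hence on $\wt(X)$, but is always finite.
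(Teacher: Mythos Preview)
Your first step is correct and clean: the instance $D=\{(e,\Phi_e(e)):\Phi_e(e)\downarrow\}$ is c.e.\ of finite weight, and any solution $X$ computes the DNR function $f(e)=\min(\omega\setminus X_e)$ with $f(e)\leq 2^e\wt(X)$. The problem is entirely in the second step, which you rightly flag as the main obstacle.

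The black box you invoke is not a theorem of Kjos-Hanssen in the form you need. Kjos-Hanssen's results concern infinite subsets of Martin-L\"of random subsets of~$\omega$; the Kjos-Hanssen--Merkle--Stephan characterization identifies DNR degree with \emph{autocomplexity}, meaning $C(A\uhr n)\geq h(n)$ for some computable order~$h$, which is far weaker than producing strings with $K(\sigma)>|\sigma|-c$. Your sketch (``diagonalize at each length'') hides the real difficulty: at length~$\ell$ you must avoid a c.e.\ set of size up to $2^{\ell-c}$ inside~$2^\ell$, and the standard trick by which a DNR function avoids c.e.\ sets of computably bounded size outputs an unbounded tuple of $f$-values, not a string of length~$\ell$. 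Even exploiting your exponential bound on~$f$ does not obviously close this gap, and results of Greenberg--Miller constructing DNR functions (with fast-growing computable bounds) that compute no real of positive effective Hausdorff dimension indicate that this direction is delicate at best. As written, the proof has a genuine gap here.

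The paper avoids the detour altogether by encoding the universal ML-test directly into~$D$. For each pair $(n,\rho)$ it reserves a finite block $D_{h(n,\rho)}$ of weight exactly $2^{-|\rho|}$, with all blocks pairwise disjoint, and sets $D=\bigcup_{n,\,\rho\in U_n}D_{h(n,\rho)}$; then $\wt(D)=\sum_n\m(U_n)<\infty$. For any solution~$X$, a weight count forces some level~$m^*$ such that the prefix-closed set $W=\{\rho:\forall\sigma\preceq\rho\ D_{h(m^*,\sigma)}\nsubseteq X\}$ is infinite. This $W$ is $X$-computable (membership is a finite check against~$X$) and lies inside $\{\rho:[\rho]\nsubseteq[U_{m^*}]\}$ by construction. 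No appeal to DNR is needed.
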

\begin{proof}
For $e\in\omega$, let $D_e$ denote the finite subset
of $\omega\times\omega$ whose canonical index is $e$.
Let $h:\omega\times 2^{<\omega}\rightarrow
\omega$ be such a computable function that:
\begin{itemize}
\item For every $(n,\rho)\in \omega\times 2^{<\omega}$,
$\wt(D_{h(n,\rho)}) = 2^{-|\rho|}$
\item $D_{h(n,\rho)}$ are mutually disjoint.
\end{itemize}
Let $(U_n:n\in\omega)$ be the universal ML-test.
We view each $U_n$ as a prefix free c.e. subset of $2^{<\omega}$.
Let $$
D=\bigcup\limits_{n\in\omega,\rho\in U_n}D_{h(n,\rho)}.
$$
Note that for every $n$,
$\wt(\bigcup\limits_{\rho\in U_n}D_{h(n,\rho)})
 = \m(U_n) \leq  2^{-n}$.
 Thus $\wt(D)<\infty$. It is clear that $D$ is c.e.
 Suppose $X\supseteq D$ with $\wt(X)<\infty$.
 Note that this means that there exists an $m^*$
 such that for every $n$, there exists a $\rho\in 2^n$
 such that $D_{h(m^*,\sigma)}\nsubseteq X$
 for all $\sigma\preceq\rho$.
 Thus let
 $$
 W = \big\{
 \rho: \forall \sigma\preceq\rho\  D_{h(m^*,\sigma)}\nsubseteq X
 \big\}.
 $$
Clearly $W$ is $X$-computable and infinite.
Moreover, if $\exists \sigma\preceq\rho\ \sigma\in U_{m^*}$,
then $D_{h(m^*,\sigma)}\subseteq X$, which
means $\rho\notin W$. Thus
$W\subseteq\{\rho:
[\rho]\nsubseteq [U_{m^*}]\}$ and we are done.

\end{proof}
An infinite set of incompressible$_K$ string
is an infinite set $W\subseteq 2^{<\omega}$
such that for some $c$, $K(\sigma)>|\sigma|-c$
for all $\sigma\in W$. It is well known
that $W$ is incompressible$_K$ if and only if
$W\subseteq
\{\rho:[\rho]\nsubseteq U_n\}$ for some $n$.
Combine Theorem \ref{dcth1} and Proposition \ref{dcprop0}
we have:
\begin{corollary}\label{dccoro3}
There exists an infinite set $W$ of incompressible$_K$
strings such that $W$ does not compute any 1-random real.
\end{corollary}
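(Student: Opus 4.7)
The plan is a straightforward combination of the two earlier results, essentially an assembly step. First I would invoke Proposition \ref{dcprop0} to fix a particular c.e.\ $\msf{DC}$ instance $D$ with the property that every $\msf{DC}$ solution $X \supseteq D$ computes an infinite set $W \subseteq \{\rho : [\rho] \nsubseteq [U_{m^*}]\}$ for some $m^*$ depending on $X$. Then I would apply Theorem \ref{dcth1} to obtain a single $G$ of finite weight that almost-contains every c.e.\ $\msf{DC}$ instance, and in particular satisfies $D \subseteq^* G$, while computing no $1$-random real.

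The minor bookkeeping step is that Theorem \ref{dcth1} only guarantees $D \subseteq^* G$, whereas Proposition \ref{dcprop0} needs an actual superset. So I would set $G' = G \cup D$. Since $D \subseteq^* G$, the symmetric difference $G' \triangle G = D \setminus G$ is finite, hence $G' \equiv_T G$; moreover $\wt(G') \leq \wt(G) + \wt(D\setminus G) < \infty$, so $G'$ is a genuine $\msf{DC}$ solution to $D$. Applying Proposition \ref{dcprop0} to $G'$ produces an infinite $W \leq_T G' \equiv_T G$ contained in $\{\rho : [\rho] \nsubseteq [U_{m^*}]\}$ for some $m^*$.

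To finish I would invoke the standard equivalence recalled just before the statement of the corollary: being contained in $\{\rho : [\rho] \nsubseteq [U_n]\}$ for some $n$ is the same as being incompressible$_K$ (i.e.\ $K(\sigma) > |\sigma| - c$ for all $\sigma \in W$, for some constant $c$). Hence $W$ is the desired infinite incompressible$_K$ set, and since $W \leq_T G$ and $G$ computes no $1$-random real, neither does $W$.

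There is no real obstacle here since all the work has already been done; the only thing to watch is the $\subseteq^*$ versus $\subseteq$ distinction, which is handled by absorbing the finite difference into a Turing-equivalent enlargement of $G$.
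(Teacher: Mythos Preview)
Your proposal is correct and follows exactly the route the paper intends: the paper's proof is simply the one-line ``Combine Theorem \ref{dcth1} and Proposition \ref{dcprop0}'', and you have spelled out the same combination, including the $\subseteq^*$ versus $\subseteq$ bookkeeping (passing to $G'=G\cup D$) that the paper leaves implicit.
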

Meanwhile,
Corollary \ref{dccoro3} can also be interpreted as
 a $1$-enumeration
 (which is a function $g:\omega\ni n\mapsto
 g(n)\in 2^n\cap T_n$)
 of the tree $T_n = \{\rho:[\rho]\nsubseteq [U_n]\}$
 does not necessarily compute an infinite path through $T_n$.
The remaining of this section will prove Theorem \ref{dcth1}.
\subsection{Proof of Theorem \ref{dcth1}}
The frame work of the proof follows the usual
line in computability theory. We build a sequence of 
conditions each forces some requirement and 
thus the oracle $G$ will satisfy all needed requirements.
We firstly define the condition we use then
we focus on how to force a given requirement
(Lemma \ref{dclemmain}).

Fix a computable order on $\omega\times\omega$.
Then we can think of a set $X\subseteq \omega\times\omega$
as a binary sequence $X\in 2^\omega$ and
Conversely, for every $\rho\in 2^{<\omega}$
or $X\in 2^\omega$, we can think of
$\rho,X$ as subset of $\omega\times\omega$.
Therefore it makes sense to write $\wt(\rho),
\rho\cap X,\rho\supseteq \sigma,\rho\setminus X$.
We say $\h{Q}$ \emph{\dominate} $\t{Q}$
if for every $Y\in \h{Q}$
there exists an $X\subseteq Y$ such that $X\in \t{Q}$.
For a class $\h{Q}\subseteq 2^\omega$,
 let
$$\wt(\h{Q}) = \inf\limits_{X\in\h{Q}}\wt(X)
\text{ and }\wt(\h{Q}) = \infty \text{ if }\h{Q}=\emptyset.$$
In this section except section \ref{dcseccomb},
all reals appearing as parameters are
computable.

In this section, a
 \emph{condition}
is a tuple $(\tau,Q,a)$ such that
\begin{itemize}
\item $\tau\in 2^{<\omega}$;
\item $Q\subseteq 2^\omega$ is a   $\Pi^0_1$
class  such that $\wt(Q)<\infty$;

\item For every $X\in Q$, $\tau\supseteq X\uhr_{|\tau|}$
and $\wt(\tau\setminus X)<a$.

\end{itemize}
Intuitively, a condition is seen as
a  $\Pi^0_1$ class of candidates of the 
oracle $G$ we construct, namely
$$
\big\{
Y\succ\tau:
\text{There exists an }X\in Q,
Y\supseteq X,
\wt(Y\setminus X)\leq a
\big\}.
$$
We use $(\tau,Q,a)$ to denote this set of candidates.
It is not necessary that for every $X\in Q$,
$\wt(X)<\infty$.
A condition $(\tau',Q',a')$
\emph{extends} a condition $(\tau,Q,a)$
(written as $(\tau,Q,a)\leq (\tau',Q',a')$) if
$(\tau',Q',a')\subseteq (\tau,Q,a)$.
The requirement is simply
$$\mcal{R}_{\Psi,m}:
\Psi^G\in [U_m]\text{ or }\Psi^G \text{ is not total.}$$
A condition $(\tau,Q,a)$ forces $\mcal{R}_{\Psi,m}$
if every $G\in (\tau,Q,a)$ satisfies
$\mcal{R}_{\Psi,m}$.

It's easy to see that:
\begin{lemma}
For every condition $(\tau,Q,a)$,
every c.e. $\msf{DC}$ instance $D$,
there exists an extension $(\tau,Q',a')$
of $(\tau,Q,a)$ such that
for every $Y\in (\tau,Q',a')$,
$D\subseteq^*Y$.
\end{lemma}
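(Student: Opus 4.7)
The plan is to approximate the c.e.\ instance $D$ by a c.e.\ subset $D^*$ of small weight with no indices below $|\tau|$, then fold $D^*$ into the $\Pi_1^0$ class $Q$ by replacing $Q$ with $Q':=\{X\cup D^*:X\in Q\}$ and shrinking the extra-weight budget $a$ to $a'$ to absorb the added weight $\wt(D^*)$.

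First I would set up the weight arithmetic. By compactness of $Q$ in $2^\omega$ and continuity of $X\mapsto\wt(\tau\setminus X)$, the maximum $\max_{X\in Q}\wt(\tau\setminus X)$ is attained, and by hypothesis it is strictly less than $a$; set $\delta:=a-\max_{X\in Q}\wt(\tau\setminus X)>0$. Since $\wt(D)<\infty$ and $D=\bigcup_s D[s]$, I can pick $t^*$ with $\wt(D\setminus D[t^*])<\delta/2$. Let $D^*:=\{n\in D\setminus D[t^*]:n\geq|\tau|\}$, so that $D^*$ is c.e., $\wt(D^*)<\delta/2$, $D\setminus D^*\subseteq D[t^*]\cup\{n:n<|\tau|\}$ is finite, and $D^*\uhr_{|\tau|}=\emptyset$.

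Next, set $a':=a-\delta/2$ and define $Q'$ as the $\Pi_1^0$ class $\{X\cup D^*:X\in Q\}$. To present $Q'$ as $\Pi_1^0$, I describe it as the set of paths through a co-c.e.\ tree $T'$ obtained by a parallel stage enumeration of $D^*$ and of removals from $T_Q$ (the co-c.e.\ tree defining $Q$): at stage $s$, remove a string $\sigma$ as soon as either some $i<|\sigma|$ with $i\in D^*[s]$ has $\sigma(i)=0$, or the residual $\sigma\setminus D^*[s]\uhr_{|\sigma|}$ has been removed from $T_Q[s]$. Then the verifications go through: $\wt(Q')\leq\wt(Q)+\wt(D^*)<\infty$; for every $X'=X\cup D^*\in Q'$, $X'\uhr_{|\tau|}=X\uhr_{|\tau|}\subseteq\tau$ and $\wt(\tau\setminus X')=\wt(\tau\setminus X)\leq a-\delta<a'$. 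For any candidate $Y$ of $(\tau,Q',a')$ with witness $X'=X\cup D^*$, $Y\supseteq X\in Q$ and $\wt(Y\setminus X)\leq\wt(Y\setminus X')+\wt(D^*)<a'+\delta/2=a$, so $Y\in(\tau,Q,a)$; and $Y\supseteq X'\supseteq D^*$ gives $D\setminus Y\subseteq D\setminus D^*$, a finite set, hence $D\subseteq^* Y$.

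The main obstacle I expect is verifying that the stage approximation $T'[s]$ is genuinely monotone decreasing in $s$, so that $T'=\bigcap_s T'[s]$ really is a co-c.e.\ tree with $[T']=\{X\cup D^*:X\in Q\}$. The subtle point is that the residual $\sigma\setminus D^*[s]\uhr_{|\sigma|}$ itself shrinks as $D^*$ is enumerated, so a string invalidated via one residual could in principle become valid later via a smaller residual; monotonicity has to be secured by a careful bookkeeping (using that $T_Q[s]$ itself only shrinks) or, alternatively, by a limit-style formulation of $T'$ that directly gives a co-c.e.\ presentation while still cutting out exactly $\{X\cup D^*:X\in Q\}$.
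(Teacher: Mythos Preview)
Your overall strategy is sound and close to the paper's, but there is a genuine gap exactly where you flagged it: the class $Q':=\{X\cup D^*:X\in Q\}$ is \emph{not} in general $\Pi_1^0$, and no bookkeeping on the stage approximation can repair this. Consider the legitimate condition $(\bot,\{\mathbf{0}\},1)$, where $\mathbf{0}$ is the all-zeros real; then your $Q'$ is the singleton $\{D^*\}$, and a singleton $\{D^*\}$ is a $\Pi_1^0$ class if and only if $D^*$ is computable---which a cofinite c.e.\ piece of a noncomputable $D$ is not. The instability you observed in the residual $\sigma\setminus D^*[s]$ is a symptom of precisely this: deciding membership in your $Q'$ requires information about the \emph{complement} of $D^*$, which is only co-c.e.

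The paper avoids the problem by relaxing the exact equality $X'=X\cup D^*$ to a weight-slack condition: it takes (essentially)
\[
Q'=\bigl\{X': D^*\subseteq X'\ \text{and}\ \exists X\in Q\,\bigl(X\subseteq X'\wedge \wt(X'\setminus X)\le\delta\bigr)\bigr\},
\]
with the initial segment of $X'$ pinned down by a single chosen $X^*\in Q$. Here ``$D^*\subseteq X'$'' is $\Pi_1^0$ because $D^*$ is c.e., and ``$\exists X\in Q$ with $X\subseteq X'$ and $\wt(X'\setminus X)\le\delta$'' is $\Pi_1^0$ by compactness of $2^\omega$ (the witnessing $X$'s form a $\Pi_1^{0,X'}$ class, and nonemptiness of such a class is $\Pi_1^0$). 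Your weight arithmetic---choosing $\delta$, shrinking $a$ to $a'$ to absorb the added weight---then goes through essentially as you wrote it; the only missing idea is to allow this slack in $Q'$ rather than insisting on the exact image $\{X\cup D^*:X\in Q\}$.
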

\begin{proof}
Simply let $n>|\tau|$ be sufficiently large
so that $\wt(D\setminus (\omega\times\omega\uhr_n))<\delta$
\footnote{Recall that we think of $\omega\times\omega$
as an element in $2^\omega$ so $\omega\times\omega\uhr_n$
is its initial segment of length $n$.}
with $\delta$ being sufficiently small
such that for some $X^*\in Q$ with  $\wt(X^*)<\infty$,
we have $\wt(\tau\setminus X^*)< a'<a-\delta$.
Let
$$Q' = \big\{X'\succ X^*\uhr_{|\tau|}:\text{There exists an }X\in Q
\text{ such that }\wt(X'\setminus X)\leq \delta,
X'\supseteq X\cup (D\setminus (\omega\times\omega\uhr_n))\big\}.$$

To see $(\tau,Q',a')\subseteq (\tau,Q,a)$,
fix a $Y\in (\tau,Q',a')$ with $X'$ being a witness.
i.e., $Y\supseteq X'\wedge\wt(Y\setminus X')\leq a'$.
By definition of $Q'$,
there exists an $X\in Q$ such that $X'\supseteq X$
and $\wt(X'\setminus X)\leq \delta$.
Therefore $$\wt(Y\setminus X)=\wt(Y\setminus X')+\wt(X'\setminus X)
\leq a'+\delta<a.$$
Meanwhile $X^*\cup (D\setminus (\omega\times\omega\uhr_n))\in Q'$,
thus $\wt(Q')<\infty$. It's trivial to verify that
for every $Y\in (\tau,Q',a')$, $D\subseteq^*Y$.
\end{proof}
Now it remains to prove the following:
\begin{lemma}\label{dclem2}
Every condition admit an extension forcing
a given requirement $\mcal{R}_{\Psi,m^*}$.
\end{lemma}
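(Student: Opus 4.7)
The plan is a case split on whether partiality of $\Psi^G$ can be forced from $(\tau, Q, a)$.

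First I would check: does there exist an extension $(\tau', Q', a')$ and an $N \in \omega$ such that $\Psi^G(N)\uparrow$ for every $G \in (\tau', Q', a')$? If yes, this extension already forces $\mcal{R}_{\Psi, m^*}$ by partiality, and we are done. Otherwise we are in the ``totality-rich'' case: from every extension, for every $N$, some $G$ in the extension realizes $\Psi^G(N)\downarrow$, and by a K\"onig-style compactness argument on the $\Pi^0_1$ skeleton of the condition, every extension contains some $G$ with $\Psi^G$ total.

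In the totality-rich case, I plan to construct an extension forcing $\Psi^G \in [U_{m^*}]$ by producing a c.e. open set $V \subseteq 2^{<\omega}$ such that (i) $\m([V]) \le 2^{-(m^*+c)}$ for the universality constant $c$ of $\{U_n\}$, and (ii) the extension forces $\Psi^G \in [V]$; then universality of the ML-test gives $[V] \subseteq [U_{m^*}]$. To enumerate $V$, for each candidate $\sigma \in 2^{<\omega}$ one searches for a minimum-weight addition to some $X \in Q$ that would force $\Psi$ (on some use) to output $\sigma$ from the extended base; whenever such an addition is found at cost below a threshold chosen as a function of $a$ and $m^*$, enumerate $\sigma$ into $V$. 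Being in the totality-rich case guarantees that enough $\sigma$'s get enumerated to cover every surviving $\Psi^G$, for otherwise one could extract an extension falling back into the partiality case. A Kraft-style inequality then converts the $\wt$-budget $a$ into a Lebesgue-measure bound on $[V]$.

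The main obstacle will be the conversion between the two size notions at play: the weight $\wt(\cdot)$ on $\omega \times \omega$, which governs admissible additions through the budget $a$, and the Lebesgue measure $\m(\cdot)$ on $2^\omega$, where the universal ML-test lives and controls what counts as $1$-random. Keeping $V$ genuinely $\Sigma^0_1$ despite the $\Pi^0_1$ nature of the condition and the $\Sigma^0_2$ flavor of ``$\Psi^G \succeq \sigma$ is realizable from $(\tau,Q,a)$,'' while organizing the bookkeeping so that the case split is honest, will likely require a layered largeness / bushy-tree argument of the sort standard in $\Pi^0_1$-class avoidance forcings.
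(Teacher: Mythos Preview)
Your high-level shape is right: build a c.e.\ set $V^*$ of small measure so that either $\Psi^G$ lands in $[V^*]$ or $\Psi^G$ is partial. But the step you flag as ``the main obstacle'' is in fact a genuine gap, and your proposed fix does not work. There is no Kraft-style inequality converting the $\wt$-budget $a$ on $\omega\times\omega$ into a Lebesgue bound on $[V]$: the set of strings $\sigma$ reachable from some $X\in Q$ by a weight-$\le a$ addition can easily have measure close to $1$, and choosing a smaller threshold does not help, because a single $\sigma$ may be reachable cheaply from many different $X$'s while another is reachable only expensively, with no additive accounting tying total measure to total weight. Your enumeration rule ``put $\sigma$ into $V$ when it is realizable below threshold'' produces a set whose measure you have no way to control.

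What the paper actually does is an amplification argument that bypasses any direct conversion. One introduces the quantity $\wt(Q(V,a))$, the minimal weight needed to dominate $Q$ while \emph{avoiding} $V$, and shows (Lemma~\ref{dclem0}(2) plus Lemma~\ref{dclem1}) that whenever $V$ is ``hard to avoid'' in this sense, the collection of subsets $V'\subseteq V$ that are already hard to avoid is $2$-large and upward closed; a Hamming-ball concentration fact (Corollary~\ref{dclemcomb2}) then yields a $V'$ with $\m(V')<\tfrac23\m(V)$ that is still hard to avoid. Iterating $r$ times from $V=2^n$ gives a $V'$ of measure $<(2/3)^r$ with $\wt(Q(V',\overline a))>\wt(Q)+\Delta\lambda/2$. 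Now the c.e.\ set $V^*$ is built in stages: at each stage one enumerates such a small $V'$ and waits for $\wt(Q)$ to visibly increase by $\Delta\lambda/2$; since $\wt(Q)<1$, at most $2/\Delta\lambda$ stages occur, so $\m(V^*)<\tfrac{2}{\Delta\lambda}(2/3)^r<\varepsilon$. The termination of this process is exactly what produces your $\Sigma_1^0/\Pi_1^0$ dichotomy. The partition-largeness combinatorics is the missing idea in your sketch; without it there is no mechanism to make $\m(V^*)$ small.
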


Using Lemma \ref{dclem2}, we can build 
a sequence of  conditions $d_0\geq d_1\geq \cdots$
where $d_s=(\tau_s,Q_s,a_s)$ such that every 
requirement is forced by one of these conditions
and $d_0$ is weight-bounded in the sense that 
for some $b<\infty$, 
$\wt(X)<b$ for all $X\in Q_0$.
By compactness, $G=\cup_s \tau_s$ is contained in all conditions and
 thus satisfy all requirements. We also have $\wt(G)<\infty$
 since $G\in d_0$ and $d_0$ is weight-bounded.
Lemma \ref{dclem2} will follows from the following:
\begin{lemma}\label{dclemmain}
For a condition $(\tau,Q,a)$
and a $\varepsilon>0$, there exists
a tuple
 $(\tau^*,Q^*,a^*,V^*)$ such that
\begin{itemize}

\item $\tau^*\succeq\tau$ and $Q^*$ \dominate\ $Q$;
\item $\wt(Q^*)+a^*<a$;
\item For every $X\in Q^*$,
$\tau^*\supseteq X\uhr_{|\tau^*|}$ and $\wt(\tau^*\setminus X)<a^*$;

\item $V^*\subseteq 2^{<\omega}$
is c.e. with $\m(V^*)<\varepsilon$. Moreover, its index can be computed from
the index of $Q$, an upper bound of $\wt(Q)$,
and $\tau,a,\varepsilon$.

\item Either for some $n$, $\Psi^{\tau^*}\uhr_n\in V^*$
(in which case we say $(\tau^*,Q^*,a^*,V^*)$
forces $\mcal{R}_{\Psi,m^*}$ in a $\Sigma_1^0$ way);
 Or
for every $X\in Q^*$,
every $Y\supseteq X$ with $\wt(Y\setminus X)\leq a^*$,
$\Psi^Y$ is non total
(in which case we say $(\tau^*,Q^*,a^*)$
forces $\mcal{R}_{\Psi,m^*}$ in a $\Pi_1^0$ way).
\end{itemize}
\end{lemma}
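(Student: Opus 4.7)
The plan is a standard $\Sigma_1^0$-vs-$\Pi_1^0$ forcing dichotomy, adapted to the weighted discrete-covering setting. First, set up parameters: pick $0 < a^{**} < a^* < a$ and a bound $b$ (using $\wt(Q) < \infty$) with $b + a^* < a$, and replace $Q$ by the $\Pi_1^0$ subclass $Q_0 := \{X \in Q : \wt(X) \leq b\}$, which is nonempty since $Q$ is $\Pi_1^0$-compact and $\wt$ is lower-semicomputable. All candidate skeletons for $Q^*$ will live in $Q_0$, automatically giving $\wt(Q^*) + a^* \leq b + a^* < a$. Then enumerate $V^*$ greedily and uniformly in the parameters: process all tuples $(\tau', n, X', s)$ with $\tau' \succeq \tau$, $X' \in Q_0[s]$, $\tau' \supseteq X'\uhr_{|\tau'|}$, $\wt(\tau' \setminus X') \leq a^{**}$, and $\Psi^{\tau'}[s]\uhr_n \downarrow$; if adding $\Psi^{\tau'}\uhr_n$ keeps $\m(V^*) < \varepsilon$, add it, otherwise skip. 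This yields a c.e.\ $V^*$ with $\m(V^*) < \varepsilon$ whose index is uniform in the index of $Q$, an upper bound on $\wt(Q)$, and $\tau, a, \varepsilon$.

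The core dichotomy uses the $\Pi_1^0$ classes
$$
Q^*_n := \bigl\{X \in Q_0 : \forall \tau' \succeq \tau,\; [\tau' \supseteq X\uhr_{|\tau'|} \wedge \wt(\tau' \setminus X) \leq a^*] \Rightarrow \Psi^{\tau'}\uhr_n \uparrow\bigr\},
$$
which are $\Pi_1^0$ since the complement in $Q_0$ asks $\Sigma_1^0$-existentially for an offending $\tau'$ (validity is clopen in $X$; $\Psi^{\tau'}\uhr_n\downarrow$ is $\Sigma_1^0$). If some $Q^*_n \neq \emptyset$, output the $\Pi_1^0$-forcing alternative with $\tau^* = \tau$ and $Q^* = Q^*_n$: for $X \in Q^*$ and any valid $Y \supseteq X$, if $\Psi^Y\uhr_n\downarrow$ via use $m$ then $\tau' = Y\uhr_m$ is itself a valid extension with $\Psi^{\tau'}\uhr_n\downarrow$, contradicting $X \in Q^*_n$; so $\Psi^Y$ must be non-total. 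Otherwise every $Q^*_n = \emptyset$, and at each $n$ the clopen sets $U_{\tau'} := \{X : \tau' \supseteq X\uhr_{|\tau'|}, \wt(\tau' \setminus X) \leq a^{**}\}$ over valid $\tau'$ with $\Psi^{\tau'}\uhr_n\downarrow$ cover $Q_0$; by $\Pi_1^0$-compactness a finite subcover works, and at least one cover element $\tau'$ has a genuine witness $X \in Q_0 \cap U_{\tau'}$. Choosing $n$ large enough that $2^{-n} < \varepsilon - \lim_s \m(V^*[s])$, when the enumeration eventually processes $(\tau', n, X, s)$ the string $\Psi^{\tau'}\uhr_n$ of measure $2^{-n}$ is added to $V^*$. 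Setting $\tau^* := \tau'$ and $Q^* := Q_0 \cap U_{\tau^*}$, which is nonempty and $\Pi_1^0$ with $\wt(\tau^* \setminus X) \leq a^{**} < a^*$ for all $X \in Q^*$, gives the $\Sigma_1^0$-forcing alternative.

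The main technical obstacle is the measure-control step in the $\Sigma_1^0$ case: one must ensure that a successful addition to $V^*$ corresponds to a \emph{genuine} witness $X \in Q_0$, not merely an $X' \in Q_0[s]$ that later drops out. This is handled by combining the strict invariant $\m(V^*) < \varepsilon$ at every stage (so the residual budget is always positive) with $\Pi_1^0$-compactness of $Q_0$ (so a cover element $\tau'$ is guaranteed a genuine witness $X$): for $n$ sufficiently large the budget exceeds $2^{-n}$ and the addition succeeds. Uniformity of $V^*$'s index in the parameters follows immediately from the effectivity of the enumeration.
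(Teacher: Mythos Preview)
Your proposal has a genuine gap in the $\Sigma_1^0$ branch, and it is exactly the ``technical obstacle'' you flag but do not actually resolve. The claim ``for $n$ sufficiently large the budget exceeds $2^{-n}$'' presupposes $\lim_s \m(V^*[s])<\varepsilon$, but your greedy rule only guarantees $\m(V^*[s])<\varepsilon$ at each finite stage; nothing stops the limit from being $\varepsilon$. An adversary can flood the early stages with tuples $(\tau',n,X',s)$ whose witnesses $X'\in Q_0[s]\setminus Q_0$ are fake, making you enumerate strings whose measures sum to $\varepsilon$, so that by the time any tuple with a genuine $X\in Q_0$ is reached the residual budget is below $2^{-n}$ for the relevant $n$. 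Compactness of $Q_0$ gives you a genuine cover element, but it does not tell you that the greedy process ever had room to enumerate \emph{its} output. There is also a threshold mismatch: emptiness of $Q^*_n$ (defined with $a^*$) only yields, for each $X\in Q_0$, a $\tau'$ with $\wt(\tau'\setminus X)\le a^*$, so the sets $U_{\tau'}$ defined with the smaller $a^{**}$ need not cover $Q_0$. Finally, your $\Pi_1^0$ branch only blocks $\tau'\succeq\tau$, whereas the lemma as stated demands non-totality for \emph{every} $Y\supseteq X$ with $\wt(Y\setminus X)\le a^*$.

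The paper's argument is not a plain $\Sigma_1^0$/$\Pi_1^0$ dichotomy and uses an idea your proposal is missing. It introduces the $\Pi_1^0$ classes $Q(V,a)$ and shows (via a combinatorial lemma about $2$-large upward-closed families, ultimately resting on a Hamming-distance concentration bound) that whenever no $\Pi_1^0$-forcing tuple exists one can find a set $V'$ of measure at most $\tfrac{2}{3}\m(V)$ with $\wt(Q(V',a'))$ still above a target threshold. Iterating $r$ times gives a single $V_s$ of measure $<(2/3)^r$ that forces $\wt(Q(V_s,\overline a))$ to exceed $\wt(Q)$ by $\Delta\lambda/2$; the c.e.\ set $V^*$ is the union of at most $2/\Delta\lambda$ such $V_s$, so $\m(V^*)<\tfrac{2}{\Delta\lambda}(2/3)^r<\varepsilon$ by choice of parameters. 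The dichotomy is then: either some round already produces a $\Pi_1^0$-forcing tuple, or the process halts with $\wt(Q(V^*,\overline a))>\wt(Q)$, which immediately yields an $X\in Q\setminus Q(V^*,\overline a)$ and hence a $\tau^*$ with $\Psi^{\tau^*}\uhr_n\in V^*$. The measure bound on $V^*$ comes from this weight-increment bookkeeping, not from a greedy budget, and that leverage is the essential missing ingredient in your plan.
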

In this case,
$(\tau^*,Q^*\cap \{X:\wt(X)\leq \wt(Q^*)+\delta\},a^*)$
is a condition extending $(\tau,Q,a)$
and forces $\mcal{R}_{\Psi,m^*}$
if $[V^*]\subseteq [U_m]$
(where $\delta$ is sufficiently small so that $\wt(Q^*)+a^*+\delta<a$).
To see this, let $Y\in (\tau^*,Q^*\cap \{X:\wt(X)\leq \wt(Q^*)+\delta\},a^*)$
with $X^*\in Q^*\cap \{X:\wt(X)\leq \wt(Q^*)+\delta\}$ being a witness. That is:
$$Y\supseteq X^*, Y\succeq\tau^*, \wt(Y\setminus X^*)\leq a^*.$$
 Since $Q^* $ dominate $Q$,
there exists $X\subseteq X^*$,  such that $X\in Q$.
Then
we have: $$\wt(Y\setminus X)
=\wt(Y\setminus X^*)+\wt(X^*\setminus X)
\leq a^*+\wt(X^*)\leq a^*+\wt(Q^*)+\delta<a.$$
i.e., $Y\in (\tau,Q,a)$. Therefore $(\tau^*,Q^*\cap \{X:\wt(X)\leq \wt(Q^*)+\delta\},a^*)$
extends $(\tau,Q,a)$.

Now it remains to prove Lemma \ref{dclemmain}.
For a condition $(\tau,Q,a)$,
note that we can select
a $X\in Q$ with $\wt(X)<\infty$ and
extends $\tau$ to $\tau'\supseteq X\uhr_{|\tau'|}$ so that
$\wt(X\setminus (\omega\times\omega\uhr_{|\tau'|}))$
is very small compared to $a-\wt(\tau'\setminus X)$
(simply let $\tau'\uhr_{|\tau|}^{|\tau'|-1} =
X\uhr_{|\tau|}^{|\tau'|-1}$).
Therefore, for convenience, we
prove Lemma \ref{dclemmain} for $(\bot,Q,3)$
and assume that $\wt(Q)<1$ (but make no  assumption on $\varepsilon$).
Before that, we firstly
establish some combinatorics.
\subsection{Small set in partition-large collection}
\label{dcseccomb}

It is corollary \ref{dclemcomb2} that we are going to use
(actually we will only use it for $k=2$).
The \emph{Hamming distance} between
$\rho,\sigma\in 2^n$, denoted as
$d_H(\rho,\sigma)$, is
$|\{m< n: \rho(m)\ne\sigma(m)\}|$;
for a set $\mcal{A}\subseteq 2^n$,
$d_H(\rho,\mcal{A}) = \min\limits_{\sigma\in\mcal{A}}d_H(\rho,\sigma)$;
for a $Z\subseteq N$,
 $d_H(\rho,Z)=|Z\Delta \{n<N:\rho(n)=1\}|$.
Recall that   a sequence of random variable
$x_0,x_1,\cdots$ is $o_p(n)$ if for every $c>0$,
$\lim\limits_{N\rightarrow\infty} \mathbb{P}(|x_N|<cN)=1$.
Applying Theorem 3 of \cite{katona1975hamming}, we have the
following.
\begin{theorem}\label{dclemcomb0}

Let $c>0$ and  $N$ be very large;
let $\mcal{A}\subseteq 2^N$ satisfy $|\mcal{A}|/2^N\geq
c$; let $\rho$ be uniformly random in $2^N$.
Then,
 $d_H(\rho,\mcal{A}) = o_p(N)$.

\end{theorem}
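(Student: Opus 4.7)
The plan is to treat this as concentration of measure on the Hamming cube. The function $g(\rho) = d_H(\rho, \mcal{A})$ is $1$-Lipschitz with respect to $d_H$, so on any set of positive density it should concentrate around its minimum value $0$ on a scale of $O(\sqrt{N}) = o(N)$; Katona's Theorem 3 supplies the vertex-isoperimetric inequality that makes this quantitative.

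First I would apply Katona's theorem in the following form: among all $\mcal{A} \subseteq 2^N$ of a given cardinality, the $t$-neighborhood $\mcal{A}^{(t)} = \{\rho : d_H(\rho,\mcal{A}) \leq t\}$ is minimized when $\mcal{A}$ is itself a Hamming ball. Hence, for $|\mcal{A}|/2^N \geq c$, we have $|\mcal{A}^{(t)}| \geq |B(r(c,N) + t)|$, where $B(s)$ denotes a Hamming ball of radius $s$ about an arbitrary fixed point, and $r(c,N)$ is the smallest integer with $|B(r(c,N))| \geq c \cdot 2^N$.

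Second, by the Chernoff/Berry--Esseen estimate for $\text{Bin}(N,1/2)$, for each fixed $c > 0$ there is a constant $\alpha_c$ with $r(c,N) \geq N/2 - \alpha_c\sqrt{N}$; hence for any fixed $\varepsilon > 0$ and $N$ sufficiently large, $r(c,N) + \varepsilon N \geq N/2 + \varepsilon N/2$. A further application of Hoeffding's inequality to $\text{Bin}(N,1/2)$ gives $|B(N/2 + \varepsilon N/2)| \geq 2^N(1 - e^{-\varepsilon^2 N/2})$. Combining the two bounds,
\[
\mathbb{P}\bigl(d_H(\rho, \mcal{A}) \leq \varepsilon N\bigr) \;\geq\; 1 - e^{-\varepsilon^2 N/2} \;\longrightarrow\; 1,
\]
which, since $\varepsilon > 0$ is arbitrary, is precisely $d_H(\rho, \mcal{A}) = o_p(N)$.

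The one non-routine point is matching the exact formulation in Katona's paper --- his Theorem 3 is stated in terms of pairwise Hamming distances within families of sets rather than as a neighborhood isoperimetric inequality --- to the form used above; this is a standard translation, and the author clearly intends this chain of reasoning by citing the theorem directly. Beyond this identification, everything reduces to routine binomial tail estimation.
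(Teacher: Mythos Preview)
The paper gives no self-contained proof here; it simply records the statement as a direct consequence of Theorem~3 of Katona's 1975 paper. Your proposal is correct and is precisely the standard unpacking of that citation --- Katona's theorem is the vertex-isoperimetric inequality on the Hamming cube, and your reduction to Hamming balls followed by binomial tail estimates is exactly how one reads off the $o_p(N)$ conclusion from it.
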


A collection $\mcal{A}\subseteq \mcal{P}(N)$
is $k$-\emph{large} in $N$
if for every $k$-partition $N_0,\cdots,N_{k-1}$
of $N$, there exists an $i\in k$ such that $N_i\in \mcal{A}$.
$\mcal{A}$ is \emph{upward closed }  if for
every $A\supseteq B$, $B\in \mcal{A}$
implies $A\in \mcal{A}$.
Let $N$ be very large
and  $\delta>0$ be a constant.
\begin{theorem}\label{dclemcomb}
Suppose $\mcal{A}\subseteq\mcal{P}(N)$ is $k$-large
in $N$ and upward closed;
let $Z$ be a uniformly random
$N/k$-element subset of $N$,
we have that:
with high probability (as $N
\rightarrow\infty$),
there exists a $A\subseteq N$ with $|A|<\delta N$
such that
$A\cup Z\in \mcal{A}$.
\end{theorem}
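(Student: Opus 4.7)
The plan is to show that a uniformly random $N/k$-subset $Z$ lies, with high probability, at small Hamming distance from some $Y\in\mcal{A}$; by upward closure this closeness immediately yields a small set $A$ with $A\cup Z\in\mcal{A}$. First I would establish a density bound for $\mcal{A}$ on the slice of $N/k$-subsets. Sample a uniform random permutation $\pi$ of $N$ and put $N_i:=\{\pi(j):iN/k\leq j<(i+1)N/k\}$ for $i<k$; this is a balanced $k$-partition whose pieces are each marginally uniform on the set of $N/k$-subsets of $N$. By $k$-largeness some $N_i\in\mcal{A}$, and exchangeability of the tuple $(N_0,\dots,N_{k-1})$ forces $\mathbb{P}[N_0\in\mcal{A}]\geq 1/k$. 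A short averaging argument (drop a uniform element from a uniform $(m+1)$-subset to recover a uniform $m$-subset, then invoke upward closure) shows the slice density is non-decreasing in $m$, so summing over $m\geq N/k$ yields $|\mcal{A}|/2^N\geq 1/(2k)$.

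Next I would apply Theorem \ref{dclemcomb0} with $c=1/(2k)$ to conclude $d_H(\rho,\mcal{A})=o_p(N)$ for uniform $\rho\in 2^N$. Katona's proof in fact supplies an exponential tail $\mathbb{P}[d_H(\rho,\mcal{A})\geq\delta N]\leq\exp(-c'\delta^2 N)$, which is the key input that lets us condition on $|\rho|=N/k$ at a cost of only a polynomial factor (of order $\sqrt{N}$ when $k=2$); this factor is swamped by the exponential bound, giving $d_H(Z,\mcal{A})=o_p(N)$ for $Z$ uniform on the slice. Finally, letting $Y\in\mcal{A}$ attain the minimum $d_H(Z,Y)<\delta N$ and setting $A:=Y\setminus Z$, we have $|A|\leq d_H(Z,Y)<\delta N$ and $Z\cup A=Z\cup Y\supseteq Y$, so upward closure of $\mcal{A}$ gives $Z\cup A\in\mcal{A}$, as required.

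The main obstacle is the transfer from the uniform measure on $2^N$ to the slice measure in the second step: the naive conditioning argument works only because Theorem \ref{dclemcomb0} carries quantitative exponential decay rather than just the bare $o_p(N)$ statement. For $k>2$ the conditioning cost becomes exponential in $N$, and one would instead invoke a direct slice-concentration inequality such as Maurey's on uniform random permutations; however, as the paper notes, only $k=2$ is ultimately used downstream, so this subtlety can be comfortably sidestepped.
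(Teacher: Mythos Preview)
Your proposal is correct for $k=2$, but the route differs from the paper's in one essential place: the transfer from the uniform measure on $2^N$ to the uniform measure on the $N/2$-slice. You do this by conditioning and then appealing to an exponential tail bound that goes beyond the bare $o_p(N)$ statement of Theorem~\ref{dclemcomb0}. The paper instead avoids any quantitative strengthening by a coupling: starting from a uniform $\rho\in 2^N$, it lets $Z'$ be chosen uniformly among the $N/2$-subsets closest to $\rho$ in Hamming distance. By symmetry $Z'$ is uniform on the slice, and $d_H(Z',\rho)=\big||\rho|-N/2\big|=o_p(N)$; one then applies Theorem~\ref{dclemcomb0} to $\rho$ and uses the triangle inequality to get $d_H(Z',\mcal{A}')=o_p(N)$. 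This uses only the stated $o_p(N)$ conclusion, nothing more. Your density step is also more elaborate than needed for $k=2$: since $2$-largeness says that for every $X\subseteq N$ either $X$ or its complement lies in $\mcal{A}$, the bound $|\mcal{A}'|/2^N\geq 1/2$ is immediate, without slice monotonicity.

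What each approach buys: your conditioning argument is conceptually straightforward but imports an extra input (exponential concentration from Harper/Katona) and, as you note, collapses for $k>2$ because the slice $\{|\rho|=N/k\}$ has exponentially small mass. The paper's coupling is self-contained relative to Theorem~\ref{dclemcomb0} and handles general $k$ by a separate bitwise encoding of $k^N$ into $2^{N'}$ rather than by a slice-concentration inequality. Either way, only $k=2$ is used downstream, so both arguments suffice for the application.
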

\begin{proof}
We only prove for $k=2$. The general case
follows by coding $k^N$ into $2^{N'}$ in a bit wise
fashion.
Firstly,
let $$
\mcal{A}' = 
\big\{\rho\in 2^N:\{n:\rho(n)=1\}\in \mcal{A}
\big\}.
$$

Since $\mcal{A}$ is $2$-large,
it's easy to see that
$\lim\limits_{N\rightarrow\infty}
|\mcal{A}'|/2^N\geq 1/2$.
Let $\rho$ be uniformly random in
$2^N$ and let 
$x = \min\{d_H(Z,\rho):Z\subseteq N,|Z|=N/2 \}$.
Let $Z'$ be selected uniformly
randomly among
$\{Z\subseteq N:|Z|=N/2\wedge d_H(Z,\rho)=x \}$.
Clearly $Z'$ is a uniformly random subset of $N$
and
 $d_H(Z',\rho)=x=o_p(N)$.
 Therefore, by Theorem \ref{dclemcomb0}, $d_H(Z',\mcal{A}')=o_p(N)$.
 This means that with high probability,
 there exists a $\sigma\in \mcal{A}'$
 such that $|Z'\Delta\{n<N:\sigma(n)=1\}|<\delta N$.
 But $\mcal{A} $ is upward closed, so
 $Z'\cup A\in\mcal{A}$ where
 $A=\{n<N:\sigma(n)=1\}\setminus Z'$.
 Thus we are done.

\end{proof}

It follows directly from \ref{dclemcomb} that:
\begin{corollary}\label{dclemcomb2}
For every $\delta>0$,  every $j\in\omega$,
every $j$ many collections $\mcal{A}_0,\cdots,
\mcal{A}_{j-1}$ with each being $k$-large in
$N$ and upward closed (where $N$ is sufficiently large),
there exists a subset $Z$ of $N$ with $|Z|<(1/k+\delta)N$
such that $Z\in \bigcap\limits_{j'<j}\mcal{A}_{j'}$.
\end{corollary}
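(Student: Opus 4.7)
The plan is to deduce Corollary \ref{dclemcomb2} from Theorem \ref{dclemcomb} by a straightforward union-bound argument combined with the upward closure hypothesis. Concretely, I would set $\delta'=\delta/j$ and choose $N$ large enough that, for each of the finitely many collections $\mcal{A}_0,\ldots,\mcal{A}_{j-1}$, the conclusion of Theorem \ref{dclemcomb} applied with $\delta$ replaced by $\delta'$ holds with failure probability less than $1/(2j)$.

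Next, I would draw a single uniformly random $(N/k)$-element subset $W$ of $N$, and use this same $W$ for all $j$ applications of Theorem \ref{dclemcomb}. For each $j'<j$, the theorem yields, with probability at least $1-1/(2j)$, the existence of a set $A_{j'}\subseteq N$ with $|A_{j'}|<(\delta/j)N$ such that $A_{j'}\cup W\in\mcal{A}_{j'}$. A union bound over $j'<j$ shows that with probability at least $1/2$ all $j$ conclusions hold simultaneously for the same $W$, so some realization of $W$ admits a family $A_0,\ldots,A_{j-1}$ with these properties.

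Setting $A=\bigcup_{j'<j}A_{j'}$ and $Z=W\cup A$, upward closure of each $\mcal{A}_{j'}$ together with $A_{j'}\cup W\subseteq Z$ gives $Z\in\bigcap_{j'<j}\mcal{A}_{j'}$. The size bound follows from
\[
|Z|\leq |W|+|A|\leq \frac{N}{k}+j\cdot\frac{\delta}{j}\cdot N=\Bigl(\frac{1}{k}+\delta\Bigr)N,
\]
as required. I do not anticipate any substantive obstacle here; the only care needed is to apply Theorem \ref{dclemcomb} with the per-collection tolerance $\delta'=\delta/j$ so that the $j$ correctors together fit under the total budget $\delta N$, and to take $N$ large enough that a finite union bound over the $j$ failure events leaves positive probability, thereby producing a single deterministic $Z$ witnessing the claim.
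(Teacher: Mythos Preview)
Your argument is correct and is precisely the intended derivation: the paper does not spell out a proof at all, merely writing ``It follows directly from \ref{dclemcomb},'' and your union-bound-plus-upward-closure argument is the natural way to cash this out. The only cosmetic point is that in your displayed size estimate the final inequality is in fact strict (since each $|A_{j'}|<(\delta/j)N$), matching the strict bound $|Z|<(1/k+\delta)N$ in the statement.
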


\subsection{Proof of Lemma \ref{dclemmain}}

For $a>0, n\in\omega, V\subseteq 2^n$,
let
\begin{align}\label{dceq6}
Q(V,a) = \big\{
Y\in 2^\omega:
&\text{There exists an }X\in Q\text{ with } X\subseteq Y;\\ \nonumber
&\text{ For every }Z\supseteq Y\text{ with }
\wt(Z\setminus Y)\leq a
\wedge (\Psi^Z\uhr_n)\downarrow
\text{ we have that }\Psi^Z\uhr_n\notin V
\big\}.
\end{align}

Clearly $Q(V,a)$ is a $\Pi_1^0$ class uniformly in
$V$ and the index of the computable representation of $a$.
Note that if there is no desired tuple to force
$\mcal{R}_{\Psi,m^*}$ in a $\Sigma_1^0$ way,
then  $\wt(Q)\geq \wt(Q(V^*,a))$
for all $a<  3/2$. To see this,
suppose $\wt(Q)<\wt(Q(V^*,a))$
for some $a< 3/2$, then there exists
an $X\in Q$ with $\wt(X)<\wt(Q(V^*,a))\wedge \wt(X)<1$,
which means $X\notin Q(V^*,a)$.
By definition of $Q(V^*,a)$,
there exists a $\tau\supseteq X\uhr_{|\tau|}$
with $\wt(\tau\setminus X)\leq a$
such that $(\Psi^\tau\uhr_n)\downarrow\in V^*$ for some $n$.
Therefore, the tuple
 $(\tau,[X\uhr_{|\tau|}]\cap Q,3/2,V^*)$ is the desired
 extension of $(\bot,Q,3)$
 forcing $\mcal{R}_{\Psi,m^*}$ in a $\Sigma^0_1$ way.
\begin{remark}
Intuitively, $\wt(Q(V,a))$ can be seen
as a measure of how much effort $Q$ has to make in
order to ensure that $\Psi^G\notin [V]$,
the larger $\wt(Q(V,a))$ is the harder it is for
$Q$ to  ensure $\Psi^G\notin [V]$.
The c.e. set $V^*$  will try to enumerate
 $V'$ so that at some time $t$,
 $\wt(Q(V',a)[t])-\wt(Q[t])$ is considerably larger than
 $\m(V')$.
 By the above observation, this will force $\wt(Q)$ to increase
 (compared to $\wt(Q[t])$)
 considerably large.
The key Lemma \ref{dclem3} shows that for every sufficiently
 small $\Delta\lambda>0$,
 we can force $\wt(Q)$ to increase by
 at least $\Delta\lambda/2$ through enumerating
 a "small" set $V'$ into $V^*$ where
 $\m(V')<(\frac{2}{3})^{\frac{1}{\Delta\lambda}}$.
 \end{remark}
 We firstly make some observations on
 $Q(V,a)$.
For convenience, whenever we write $Q(V,a)$,
 it automatically implies that $a$ is irrational
 (see footnote below for the reason).

\begin{lemma}\label{dclem0}
Let $\rho\in 2^{<\omega}$,
$V_0,V_1\subseteq 2^n, V\subseteq 2^m$;
let $a,\lambda,\lambda'>\delta>0$.
\begin{enumerate}
\item If $a'\geq a$, then $Q(V,a')\subseteq Q(V,a)$;
If $V'\supseteq V$, then $Q(V',a)\subseteq Q(V,a)$.
\item Suppose $\wt([\rho]\cap Q(V_i,a))<\lambda+\lambda'$
for $i\in 2$ and $\wt(\rho)> \lambda-\delta$,
then we have:
$\wt([\rho]\cap Q(V_0\cup V_1,a-2\lambda'))<\lambda+3\lambda'$.

\item Suppose $a\leq 3/2$, $V'=[V]^\preceq\cap 2^n$ and

$\wt([\rho]\cap Q(V',a))<3/2<\wt([\rho]\cap Q(V,a))$.
Then  there    exists a desired tuple
 forcing $\mcal{R}_{\Psi,m^*}$ in a $\Pi_1^0$ way.

\item Suppose $a\leq 3/2$, $\wt( Q(2^n,a))<3/2$,
then  there   exists a desired tuple
 forcing $\mcal{R}_{\Psi,m^*}$ in a $\Pi_1^0$ way.
\end{enumerate}

\end{lemma}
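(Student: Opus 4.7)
The plan is to handle the four parts in ascending order of difficulty, with (1) and (4) being essentially definitional, (2) being a clean inclusion--exclusion argument on weights, and (3) being the real work. For (1), enlarging either $a$ or $V$ only strengthens the universally quantified condition in the definition of $Q(V,a)$, so the class shrinks. For (4), the desired tuple is $(\bot, Q(2^n,a) \cap \{X : \wt(X) < 3/2\}, a, \emptyset)$: by definition of $Q(2^n,a)$, for any $X$ in the class and any $Y \supseteq X$ with $\wt(Y \setminus X) \leq a$ the value $\Psi^Y \uhr_n$ must diverge (otherwise it lies in $2^n$, contradicting membership), hence $\Psi^Y$ is non total. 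The budget $\wt(Q^*) + a^* < 3/2 + 3/2 = 3$ matches the ambient parameter from Lemma~\ref{dclemmain}, and the dominance of $Q^*$ over $Q$ is built into the definition of $Q(V,a)$.

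For (2), I would pick witnesses $X_i \in [\rho] \cap Q(V_i,a)$ with $\wt(X_i) < \lambda + \lambda'$, then take $X := X_0 \cup X_1$. Since $X_0, X_1 \supseteq \rho$, inclusion--exclusion gives
\[
\wt(X) \leq \wt(X_0) + \wt(X_1) - \wt(\rho) < 2(\lambda+\lambda') - (\lambda-\delta) = \lambda + 2\lambda' + \delta,
\]
which is strictly less than $\lambda + 3\lambda'$ because $\delta < \lambda'$. Symmetrically, $\wt(X \setminus X_i) \leq \wt(X_{1-i}) - \wt(\rho) < \lambda' + \delta < 2\lambda'$, so any $Z \supseteq X$ with $\wt(Z \setminus X) \leq a - 2\lambda'$ has $\wt(Z \setminus X_i) < a$ for both $i$. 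Invoking $X_i \in Q(V_i,a)$ then gives $\Psi^Z \uhr_n \notin V_0 \cup V_1$ whenever defined, so $X$ witnesses $\wt([\rho] \cap Q(V_0 \cup V_1, a - 2\lambda')) < \lambda + 3\lambda'$.

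Part (3) is the main obstacle. From the hypothesis, some $X^* \in [\rho] \cap Q(V',a)$ has $\wt(X^*) < 3/2$, and this $X^*$ must fail to lie in $Q(V,a)$ because $\wt([\rho] \cap Q(V,a)) > 3/2$. Hence there exists $Z^* \supseteq X^*$ with $\wt(Z^* \setminus X^*) \leq a$ and $\Psi^{Z^*} \uhr_m \downarrow \in V$, which by $V' = [V]^{\preceq} \cap 2^n$ and $X^* \in Q(V',a)$ forces $\Psi^{Z^*} \uhr_n \uparrow$; so at least $\Psi^{Z^*}$ is non total. The difficulty is that the $\Pi_1^0$ forcing clause requires non totality for \emph{every} small extension $Y$ of a generic member of $Q^*$, not a single witness. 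My proposed tuple is $(\rho, [\rho] \cap Q(V',a) \cap Q(2^n,a^*), a^*, \emptyset)$ for a well-chosen $a^* > 0$: membership in $Q(2^n,a^*)$ supplies the uniform non totality by the mechanism of (4), while $Q(V',a)$ keeps the weight small. The delicate step is checking that the strict gap $3/2 < \wt([\rho] \cap Q(V,a))$ forces enough of $[\rho] \cap Q(V',a)$ to lie in $Q(2^n,a^*)$ that the intersection still has weight below $3/2$. This is where the hypothesis of (3) is used essentially, and where the robustness of non totality under further small extensions (not automatic for arbitrary Turing functionals) has to be extracted from the structural difference between the two sides of the inequality.
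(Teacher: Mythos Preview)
Your treatment of items (1), (2), and (4) is correct and matches the paper's argument essentially verbatim: (1) is monotonicity in the parameters, (4) uses $(\bot, Q(2^n,a), a)$ with the observation that membership in $Q(2^n,a)$ forces $(\Psi^Y\uhr_n)\uparrow$, and (2) is the inclusion--exclusion computation on $X=X_0\cup X_1$ that you wrote out.

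Item (3), however, has a genuine gap. Your proposed tuple $(\rho,\, [\rho]\cap Q(V',a)\cap Q(2^n,a^*),\, a^*)$ relies on the intersection with $Q(2^n,a^*)$ to supply uniform non-totality, but you give no argument that this intersection has weight below $3/2$, and in fact there is no reason it should. The witness $X^*\in [\rho]\cap Q(V',a)$ with $\wt(X^*)<3/2$ need not lie in $Q(2^n,a^*)$ for any $a^*>0$: it is entirely possible that $(\Psi^{X^*}\uhr_n)\downarrow$ (just with value outside $V'$), in which case $X^*\notin Q(2^n,a^*)$, and intersecting with $Q(2^n,a^*)$ may discard every light element. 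The hypothesis $\wt([\rho]\cap Q(V,a))>3/2$ says nothing about $Q(2^n,a^*)$.

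The idea you are missing is that the initial segment $\tau^*$ of the tuple should not be $\rho$ but rather the finite string $\tau$ coming from the failure of $X^*$ to lie in $Q(V,a)$: there is $\tau\supseteq X^*\uhr_{|\tau|}$ with $\wt(\tau\setminus X^*)<a$ and $(\Psi^\tau\uhr_m)\downarrow\in V$. The paper takes the tuple $(\tau,\, [X^*\uhr_{|\tau|}]\cap Q(V',a),\, a)$. Now for any $X\in [X^*\uhr_{|\tau|}]\cap Q(V',a)$ and any $Y\supseteq X$ with $Y\succ\tau$ and $\wt(Y\setminus X)\leq a$, the commitment $Y\succ\tau$ forces $\Psi^Y\uhr_m\in V$, while membership in $Q(V',a)$ forces $(\Psi^Y\uhr_n)\downarrow\Rightarrow \Psi^Y\uhr_n\notin V'=[V]^\preceq\cap 2^n$; these are incompatible, so $(\Psi^Y\uhr_n)\uparrow$. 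The weight bound is immediate since $X^*$ itself lies in the class. In short, the mechanism for non-totality is not an auxiliary $Q(2^n,a^*)$ but the tension between the $\Sigma_1^0$ commitment encoded in $\tau$ and the $\Pi_1^0$ constraint encoded in $Q(V',a)$.
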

\begin{remark}
Intuitively, item (2) says that if we 
can avoid each $V_0,V_1$ with a small price,
namely $\lambda'$, then 
we can avoid them together with a small price,
namely $3\lambda'$.
The intuition for item (3) is that 
if we cannot avoid $V$ with a certain price, then 
we can not  avoid $ [V]^\preceq\cap 2^n$
since if $\Psi^G$ is total, then 
$\Psi^G\in [V]\Leftrightarrow \Psi^G\in [[V]^\preceq\cap 2^n]$.

\end{remark}
\begin{proof}
Item (1) is trivial and its proof is skiped.

Proof of item (2):
Let $X_i\in [\rho]\cap Q(V_i,a)$ with
$\wt(X_i)<\lambda+\lambda'$
and let $X = X_0\cup X_1$.
We show that $X$ is a $\msf{DC}$ instance
avoiding $V_0\cup V_1$
(in the sense of (\ref{dceq6})).
Clearly $X\in [\rho]$.
Meanwhile, $$\wt(X)=\wt(X_0)+\wt(X_1)-\wt(X_0\cap X_1)
<2\lambda+2\lambda'-\lambda+\delta
<\lambda+3\lambda'.$$
Note that for every $Y\supseteq X$ with
$\wt(Y\setminus X)\leq a-2\lambda'$,
it holds that
\begin{align}
\wt(Y\setminus X_i)
&=\wt(Y\setminus X)+\wt(X\setminus X_i)
\\ \nonumber
&\leq a-2\lambda'+\wt(X_{1-i}\setminus X_i)\\ \nonumber
&=a-2\lambda'+\wt(X_{1-i}\setminus (X_0\cap X_1))
\\ \nonumber
&\leq a-2\lambda'+\wt(X_{1-i}\setminus \rho)
\\ \nonumber
&< a-2\lambda'+\lambda+\lambda'-\lambda+\delta
\\ \nonumber
&<a
\end{align}

This means that for every $Y\supseteq X$ with
$\wt(Y\setminus X)\leq a-2\lambda'$,
it holds that $(\Psi^Y\uhr_n)\downarrow
\rightarrow\Psi^Y\uhr_n\notin V_0\cup V_1$.
i.e., $X\in Q(V_0\cup V_1,a-2\lambda')$.
Thus 

$\wt([\rho]\cap Q(V_0\cup V_1,a-2\lambda'))<\lambda+3\lambda'$.

\ \\

Proof of item (3):
Let $X^*\in [\rho]\cap Q(V',a)$
with $\wt(X^*)<3/2$.
Clearly $X^*\notin Q(V,a)$.
Therefore there exists a
$\tau\supseteq X^*\uhr_{|\tau|}$ with $\wt(\tau\setminus X^*)<a$
\footnote{Since $a$ is irrational and $\wt(\tau\setminus X^*)\leq a$
is  rational, therefore $\wt(\tau\setminus X^*)<a$.}
such that $(\Psi^\tau\uhr_m)\downarrow\in V$. We show that $(\tau,
[X^*\uhr_{|\tau|}]\cap Q(V',a),a)$ is a desired tuple
 forcing $\mcal{R}_{\Psi,m^*}$ in a $\Pi_1^0$ way.
Clearly $$\wt([X^*\uhr_{|\tau|}]\cap Q(V',a))+a\leq \wt(X^*)+3/2<3,$$
 and $[X^*\uhr_{|\tau|}]\cap Q(V',a)$
\dominate\ $Q$. Fix a $X\in [X^*\uhr_{|\tau|}]\cap Q(V',a)$,
a $Y\supseteq X$ with $Y\succ\tau\wedge \wt(Y\setminus X)\leq a$.
Note that $(\Psi^Y\uhr_n)\downarrow\rightarrow\Psi^Y\uhr_n\notin V'$.
But $\Psi^Y\uhr_m\in V$ and $V'=[V]^\preceq\cap 2^n$.
This means $(\Psi^Y\uhr_n)\uparrow$.

Proof of item (4):
We show that $(\bot, Q(2^n,a),a)$
is the desired tuple
 forcing $\mcal{R}_{\Psi,m^*}$ in a $\Pi_1^0$ way.
Fix a $X\in  Q(2^n,a)$.
Clearly for every $Y\supseteq X$
with $\wt(Y\setminus X)\leq a$,
$(\Psi^Y\uhr_n)\uparrow$.
Meanwhile $\wt( Q(2^n,a))+a<3$.

\end{proof}
The following is the key Lemma
and is the only part of the proof concerning
the combinatorics in section \ref{dcseccomb}.
\begin{lemma}\label{dclem1}
Let $0\leq \lambda\leq 1,0<a\leq 3/2, V\subseteq 2^n$
and let $\Delta\lambda<1/6$.
Suppose $\wt(Q)\geq \lambda$ and
 $\wt(Q(V,a))\geq \lambda+\Delta\lambda$.
 Suppose there does not exists a desired tuple
 forcing $\mcal{R}_{\Psi,m^*}$ in a $\Pi_1^0$ way.
Then for every $\delta>0$,
there exists a finite set $V'\subseteq 2^{<\omega}$ with
$\m(V')<\frac{2}{3}\m(V)$
such that $\wt(Q(V',a+2\Delta\lambda))> \lambda+\Delta\lambda-\delta$.

\end{lemma}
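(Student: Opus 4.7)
The plan is to identify an upward-closed, $2$-large collection of subsets of $V$ and then invoke Corollary \ref{dclemcomb2} with $k=2$ to extract a small member. Define
$$\mcal{B} = \big\{B\subseteq V : \wt(Q(B,a+2\Delta\lambda)) > \lambda+\Delta\lambda-\delta\big\},$$
the subsets of $V$ that remain hard to avoid at parameter $a+2\Delta\lambda$. Part (1) of Lemma \ref{dclem0} gives that $Q(\cdot,a+2\Delta\lambda)$ is antitone in its first argument, so $B\mapsto\wt(Q(B,a+2\Delta\lambda))$ is monotone and $\mcal{B}$ is upward closed in $\mcal{P}(V)$.

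The heart of the proof is to verify $2$-largeness of $\mcal{B}$. Suppose, toward contradiction, a partition $V=V_0\sqcup V_1$ with neither $V_i\in\mcal{B}$. Pick for each $i\in\{0,1\}$ a witness $X_i\in Q(V_i,a+2\Delta\lambda)$ with $\wt(X_i)$ near $\lambda+\Delta\lambda-\delta$. Setting $\lambda'=\Delta\lambda-\delta$, the strategy is to combine $X_0,X_1$ via Lemma \ref{dclem0}(2), localized to a cylinder $[\rho]$ with $\wt(\rho)>\lambda-\delta$, to produce a single element of $[\rho]\cap Q(V,a+2\delta)$ of weight strictly less than $\lambda+3(\Delta\lambda-\delta)$. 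This will contradict $\wt([\rho]\cap Q(V,a+2\delta))\geq\lambda+\Delta\lambda$, provided $\rho$ is chosen so that the cylinder $[\rho]$ captures essentially all of the mass of $Q(V,a+2\delta)$. Such $\rho$'s exist because an infimum-witness for $\wt(Q(V,\cdot))$ sits inside some small cylinder of weight near $\lambda$. The standing hypothesis that no tuple forces $\mcal{R}_{\Psi,m^*}$ in a $\Pi_1^0$ way, combined with parts (3)--(4) of Lemma \ref{dclem0}, rules out the degenerate configurations where no suitable $\rho$ exists. To handle the dependence on $\rho$ uniformly, one enumerates a finite list $\rho_0,\dots,\rho_{j-1}$ of candidate cylinders and feeds the associated collections
$$\mcal{A}_{\rho_{j'}} = \big\{B\subseteq V : \wt([\rho_{j'}]\cap Q(B,a+2\Delta\lambda)) > c_{\rho_{j'}}\big\}$$
into Corollary \ref{dclemcomb2} simultaneously, so that one extracted $V'$ lies in every $\mcal{A}_{\rho_{j'}}$ and hence realizes the global weight bound after summing the cylinder contributions.

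Finally, with the $2$-large upward-closed family in hand, Corollary \ref{dclemcomb2} with $k=2$ and tolerance $\eta<1/6$, applied on the ground set $V$ (passed to a finer level of $2^{<\omega}$ if necessary to meet the combinatorial size threshold without altering $\m(V)$), produces a $V'$ with $|V'|/|V|<1/2+\eta<2/3$, hence $\m(V')<\tfrac{2}{3}\m(V)$ and $\wt(Q(V',a+2\Delta\lambda))>\lambda+\Delta\lambda-\delta$. The main obstacle is the cylinder bookkeeping in the middle step: choosing the $\rho_{j'}$'s and thresholds $c_{\rho_{j'}}$ so that the cylinder-localized hypothesis of Lemma \ref{dclem0}(2) propagates to a global weight estimate on $\wt(Q(V',a+2\Delta\lambda))$, while keeping the compounded losses in the $a$-parameter absorbed inside the slack $2\Delta\lambda$ provided by the lemma.
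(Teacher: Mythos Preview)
Your overall framework—define $2$-large upward-closed collections indexed by cylinders $\rho$, then invoke Corollary~\ref{dclemcomb2}—matches the paper. But the argument you sketch for $2$-largeness does not close. Suppose $V_0\sqcup V_1=V$ with $\wt(Q(V_i,a+2\Delta\lambda))\leq\lambda+\Delta\lambda-\delta$ for both $i$, and localize to a cylinder $[\rho]$ with $\wt(\rho)>\lambda-\delta$. Applying Lemma~\ref{dclem0}(2) with $\lambda'=\Delta\lambda-\delta$ yields only
\[
\wt\big([\rho]\cap Q(V,\,a+2\delta)\big)<\lambda+3(\Delta\lambda-\delta),
\]
and $\lambda+3(\Delta\lambda-\delta)>\lambda+\Delta\lambda$ for small $\delta$, so this does \emph{not} contradict $\wt([\rho]\cap Q(V,a+2\delta))\geq\lambda+\Delta\lambda$. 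The factor-of-three loss built into Lemma~\ref{dclem0}(2) means you cannot extract $2$-largeness from the bare hypothesis $\wt(Q(V,a))\geq\lambda+\Delta\lambda$; you need the much stronger input $\wt([\rho]\cap Q(V_0\cup V_1,a))\geq\lambda+3\Delta\lambda$, and nothing in your sketch produces it.

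The paper gets this stronger input by a different choice of cylinders: it takes $\rho$ with $\lambda-\delta<\wt(\rho)\leq\lambda+\Delta\lambda-\delta$, which (after passing to a large enough level via compactness) forces $[\rho]\cap Q(V,a)=\emptyset$, so $\wt([\rho]\cap Q(V,a))=\infty>3/2$. Then Lemma~\ref{dclem0}(3), together with the no-$\Pi_1^0$-forcing hypothesis, transfers this to $\wt([\rho]\cap Q([V]^\preceq\cap 2^{n'},a))\geq 3/2\geq\lambda+3\Delta\lambda$ (here $\lambda\leq 1$ and $\Delta\lambda<1/6$ are used). Now the contrapositive of Lemma~\ref{dclem0}(2) gives $\wt([\rho]\cap Q(V_i,a+2\Delta\lambda))\geq\lambda+\Delta\lambda$ for some $i$, which is exactly $2$-largeness of $\mcal{A}_\rho$. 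So the role of parts (3)--(4) is not to ``rule out degenerate $\rho$'' as you suggest, but to furnish the $\geq 3/2$ bound that makes the contrapositive of (2) fire; your proposal assigns them the wrong job and never reaches the threshold $\lambda+3\Delta\lambda$ required for the combinatorial step.
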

\begin{proof}
By compactness, there exist
 $t,n\in\omega$ so that
for every $\rho\in Q(V,a)[t]\uhr_n$,
$\wt(\rho)>\lambda+\Delta\lambda-\delta$
and for every $\rho\in Q[t]\uhr_n$, $\wt(\rho)> \lambda-\delta$.
Fix a $\rho\in 2^n$ with $\lambda-\delta< \wt(\rho)\leq \lambda+\Delta\lambda-\delta$
(which means $[\rho]\cap Q(V,a)=\emptyset$).
We firstly prove that for every
$2$-partition $V_0,V_1$ of $[V]^\preceq\cap 2^{n'}$,
there exists an $i\in 2$ such that $\wt( [\rho]\cap Q(V_i,a+2\Delta\lambda))
\geq \lambda+\Delta\lambda$ (where $n'>n$ is sufficiently large).
Since $[\rho]\cap Q(V,a)=\emptyset$,
so $\wt([\rho]\cap Q(V,a))>3/2$.
By item (3) of Lemma \ref{dclem0},
$$\wt([\rho]\cap Q(V_0\cup V_1,a))\geq 3/2
\geq \lambda+3\Delta\lambda.$$
By Lemma \ref{dclem0} item (2),
we have that for some $i\in 2$,
$\wt([\rho]\cap Q(V_i,a+2\Delta\lambda))\geq \lambda+\Delta\lambda$.
Let $n'$ be sufficiently large
and for every $\rho\in 2^n$
with $\lambda-\delta<\wt(\rho)\leq \lambda+\Delta\lambda-\delta$,
let
\begin{align}\nonumber
\mcal{A}_\rho =
\{
V'\subseteq 2^{n'}:
\wt([\rho]\cap Q(V',a+2\Delta\lambda))\geq \lambda+\Delta\lambda
\}
\end{align}
We've shown that $\mcal{A}_\rho$ is $2$-large
in $2^{n'}\cap [V]^\preceq$.
By Lemma \ref{dclem0} item (1), $\mcal{A}_\rho$
is upward closed.
By Corollary \ref{dclemcomb2}, there exists
a $V'\subseteq 2^{n'}$ with
$\m(V')<\frac{2}{3}\m(V)$
such that $V'\in \bigcap\limits_{\rho\in 2^n,
\lambda-\delta<\wt(\rho)\leq \lambda+\Delta\lambda-\delta}\mcal{A}_\rho$.
Meanwhile, for every $\rho$ with $\wt(\rho)\leq \lambda-\delta$,
if $X'\succ \rho$, then
there exists no $X\in Q$ such that $X'\supseteq X$
since otherwise $\wt(X\uhr_n)\leq \lambda-\delta$,
a contradiction to our setting for $n$.
This means
$[\rho]\cap Q(V',a+2\Delta\lambda)=\emptyset$
if $\wt(\rho)\leq \lambda-\delta$.
Thus
$\wt([\rho]\cap Q(V',a+2\Delta\lambda))> \lambda+\Delta\lambda-\delta$
for all $\rho\in 2^n$, which means
$\wt(Q(V',a+2\Delta\lambda))> \lambda+\Delta\lambda-\delta$.

\end{proof}
Now fix
$0<\underline{a}<\overline{a}<3/2$,
a sufficiently small $1/6>\Delta\lambda>0$
and an $r\in\omega$
be such that:
\begin{align}
\left\{
\begin{aligned}\label{dceq0}
&\underline{a}+2(r+1)\Delta\lambda<\overline{a};
\\
&\frac{2}{\Delta\lambda}(\frac{2}{3})^r<\varepsilon.
\end{aligned}
\right.
\end{align}

\begin{lemma}\label{dclem3}
Let $0\leq \lambda\leq 1$.
Suppose $\wt(Q)\geq \lambda$
 and
 suppose there does not exist a desired tuple
 forcing $\mcal{R}_{\Psi,m^*}$ in a $\Pi_1^0$ way.
 Then there exists a $V'\subseteq 2^{n'}$ for some
 $n'$ with $\m(V')<(\frac{2}{3})^r$ such that
  $\wt(Q(V',\overline{a}))> \lambda+\Delta\lambda/2$.
\end{lemma}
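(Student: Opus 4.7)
The plan is to iterate Lemma \ref{dclem1} exactly $r$ times, starting from the whole space and each time shrinking the Lebesgue measure of the candidate set $V$ by a factor of $2/3$ while inflating $a$ by at most $2\Delta\lambda$.

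For the base of the recursion, pick any $n_0$ and set $V_0 = 2^{n_0}$, $a_0 = \underline{a}$. The assumption that no tuple forces $\mcal{R}_{\Psi,m^*}$ in a $\Pi_1^0$ way, combined with item (4) of Lemma \ref{dclem0} taken contrapositively, gives $\wt(Q(V_0,\underline{a})) \geq 3/2$, and this exceeds $\lambda + \Delta\lambda$ since $\lambda \leq 1$ and $\Delta\lambda < 1/6$. To set up the recursion I would introduce the slowly decreasing auxiliary sequence
\[
\Delta\lambda_i \;:=\; \Delta\lambda\Bigl(1 - \frac{i}{4r}\Bigr),\qquad i = 0, 1, \ldots, r,
\]
so that $\Delta\lambda_0 = \Delta\lambda$ and $\Delta\lambda_r = \tfrac{3}{4}\Delta\lambda$, and maintain the invariants $V_i \subseteq 2^{n_i}$, $\m(V_i) < (2/3)^i$, $a_i = \underline{a} + 2\sum_{j<i}\Delta\lambda_j$, and $\wt(Q(V_i,a_i)) \geq \lambda + \Delta\lambda_i$.

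To pass from step $i$ to step $i+1$, invoke Lemma \ref{dclem1} with its $\Delta\lambda$-parameter equal to $\Delta\lambda_i$ and its error parameter $\delta := \Delta\lambda_i - \Delta\lambda_{i+1} = \Delta\lambda/(4r)$. The lemma then supplies a $V_{i+1} \subseteq 2^{n_{i+1}}$ with $\m(V_{i+1}) < \tfrac{2}{3}\m(V_i) < (2/3)^{i+1}$ and $\wt(Q(V_{i+1},a_{i+1})) > \lambda + \Delta\lambda_i - \delta = \lambda + \Delta\lambda_{i+1}$, closing the induction. After $r$ steps $\m(V_r) < (2/3)^r$, and since $\sum_{i<r}\Delta\lambda_i < r\Delta\lambda$ the total inflation satisfies $a_r - \underline{a} < 2r\Delta\lambda$, so $a_r < \overline{a}$ by the first inequality of \eqref{dceq0}. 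Monotonicity of $Q(V,\cdot)$ in $a$ (item (1) of Lemma \ref{dclem0}) then gives $\wt(Q(V_r,\overline{a})) \geq \wt(Q(V_r,a_r)) \geq \lambda + \tfrac{3}{4}\Delta\lambda > \lambda + \tfrac{1}{2}\Delta\lambda$, which is precisely the conclusion.

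The main obstacle is bookkeeping rather than any new combinatorics: Lemma \ref{dclem1} demands an excess of at least $\Delta\lambda$ in its hypothesis but only returns an excess of $\Delta\lambda - \delta$, so a recursion run with a fixed $\Delta\lambda$ would collapse after a single application. The linearly decreasing $\Delta\lambda_i$ schedule above is chosen precisely so that the cumulative loss of $\Delta\lambda/4$ spread across the $r$ iterations is absorbed while still leaving a final margin of $\tfrac{3}{4}\Delta\lambda$, comfortably above the $\Delta\lambda/2$ demanded by the statement.
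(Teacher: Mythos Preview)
Your proof is correct and follows essentially the same approach as the paper: start from $V_0=2^{n_0}$, use Lemma \ref{dclem0}(4) to get the base estimate, and then iterate Lemma \ref{dclem1} until the measure drops below $(2/3)^r$, checking that the accumulated increase in $a$ stays below $\overline{a}$ via \eqref{dceq0}. The paper's proof simply says ``repeatedly apply Lemma \ref{dclem1} for $r+1$ times (where $\delta$ is sufficiently small)'' and records the final bound $\lambda+\Delta\lambda-(r+1)\delta$, whereas you make the bookkeeping explicit by introducing the decreasing schedule $\Delta\lambda_i=\Delta\lambda(1-i/(4r))$ and the fixed loss $\delta=\Delta\lambda/(4r)$; this is exactly the precision the paper's phrase ``$\delta$ sufficiently small'' is hiding, so the two arguments are the same in substance.
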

\begin{proof}
Firstly, by Lemma \ref{dclem0} item (4),
$Q(2^n,\underline{a})\geq \lambda+\Delta\lambda$.
Now repeatedly apply Lemma \ref{dclem1} for $r+1$ times
(where $\delta$ is sufficiently small)
we have that there exists a $V'\subseteq 2^{n'}$
for some $n'$
with $\m(V')<(\frac{2}{3})^r$
such that $\wt(Q(V',\underline{a}+2(r+1)\Delta\lambda))\geq \lambda+\Delta\lambda-
(r+1)\delta$.
Therefore $\wt(Q(V',\overline{a}))
>\lambda+\Delta\lambda/2$.
\end{proof}

\begin{proof}[Proof of Lemma \ref{dclemmain}]
Now we define the following c.e. set
$V^*$:
\begin{definition}\label{dcdef1}
At step $0$,
search for a $V_0\subseteq 2^{n'}$
(for some $n'$) with  $\m(V_0)<(\frac{2}{3})^r$
such that $\wt(Q(V_0,\overline{a}))> \Delta\lambda/2$
and enumerate $V_0$ into $V^*$
(if such $V_0$ is not found it never enters the next step).

At step $s\geq 1$, wait for a time $t$ so that
$\wt(Q[t])> s\Delta\lambda/2$
(if such time $t$ does not appear, do nothing;
and we say $V^* $ enters phase II at step $s$ if such $t$ exists);
then search for a $V_s\subseteq 2^{n'}$
(for some $n'$) with  $\m(V_s)<(\frac{2}{3})^r$
such that $\wt(Q(V_s,\overline{a}))> (s+1)\Delta\lambda/2$
and enumerate $V_s$ into $V^*$
(if such $V_s$ is not found it never enters the next step).

\end{definition}
Note that $V^*$ is indeed c.e. since by compactness,
for every $\Pi^0_1$ class $\h{Q}$,
$\wt(\h{Q})> \h{\lambda}$ is
a $\Sigma^0_1$ relation uniformly
in the index of $\h{Q}$ and $\h{\lambda}$.
Suppose there does not exists a desired tuple
 forcing $\mcal{R}_\Psi$ in a $\Pi_1^0$ way.
Then by Lemma \ref{dclem3},
 at each step $s\geq 1$, if $V^*$ enters
 phase II,
the set $V_s$ exists.

Therefore, if there does not exists a desired tuple
 forcing $\mcal{R}_{\Psi,m^*}$ in a $\Pi_1^0$ way,
the program ends up by entering a step $s$
and never found a $t$ so that
$\wt(Q[t])> s\Delta\lambda/2$,
which means $\wt(Q)\leq s\Delta\lambda/2$.
But entering step $s$ implies $\wt(Q(V^*,\overline{a}))> s\Delta\lambda/2$
and $\wt(Q)>(s-1)\Delta\lambda/2$
(since $V^*$ enters Phase II at step $s-1$).
Since $\wt(Q)<1$,
we have $$s\Delta\lambda/2
= (s-1)\Delta\lambda/2+\Delta\lambda/2
<1+\Delta\lambda/2<3/2.$$
Therefore
there exists an $X\in Q\setminus Q(V^*,\overline{a})$
with $\wt(X)<3/2$.
By definition of $Q(V^*,\overline{a})$,
there exists a $\tau^*\supseteq X\uhr_{|\tau^*|}$ with
$\wt(\tau^*\setminus X)<\overline{a}$
such that $(\Psi^{\tau^*}\uhr_{n^*})\downarrow\in V^*$
for some $n^*$.

Meanwhile, since
 $\wt(Q)<1$,
 the algorithm of $V^*$ enters at most $\frac{2}{\Delta\lambda}$
 many steps
 and at each step,
  $\m(V^*)$ is increased by less
 than $(\frac{2}{3})^r$.
Therefore,
by (\ref{dceq0}),
 $$\m(V^*)<\frac{2}{\Delta\lambda}(\frac{2}{3})^r<\varepsilon.$$
Thus
 $(\tau^*,[X\uhr_{|\tau^*|}]\cap Q,\overline{a},V^*)$
is the desired tuple
since $$\wt([X\uhr_{|\tau^*|}]\cap Q)+\overline{a}\leq
\wt(X)+\overline{a}<
3/2+3/2=3.$$

\end{proof}

\section{The weakness of majorizing
bounded martingale}
\label{dcsec1}

Kastermans, Lempp, and Miller
showed that $D\in $High(CR,MLR)
if and only if there exists a $D$-computable
martingale $S$ such that $S$ succeed on
every non-1-random real
(see also \cite{bienvenu2012randomness}).
Therefore, in order to separate PA completeness
from High(CR,MLR), it suffices (and necessary)
to construct a martingale $S$ succeeding
on all non-1-random while does not compute
a PA degree.
A function $S$ is bounded if $range(S)$
is a bounded subset of $\mathbb{R}$.
In this section,
the domain of a martingale
(supermartingale resp) is $2^{<\omega}$
if not claimed otherwise.
A tree $T
\subseteq 2^{<\omega}$ is \emph{homogeneous}
if
there exists a sequence of
set $B_0,B_1\cdots\subseteq \{0,1\}$ such that
$\rho\in T$ if and only if $\rho(n)\in B_n$
for all $n<|\rho|$.

\begin{theorem}\label{dcth0}
Let
$\overline{\S}_0,\overline{\S}_1\cdots$ be
 a sequence of bounded martingales
(not necessarily c.e.) such that
$\lim\limits_{n\rightarrow\infty}\overline{S}_n(\bot)=0$;
let $T\subseteq 2^{<\omega}$ be a homogeneous
tree that does not admit a computable infinite path.
There exists a martingale
$S$ such that
$\S\geq \overline{S}_n$ for infinitely many $n$
and $\S$ does not compute
an infinite path through $T$.
\end{theorem}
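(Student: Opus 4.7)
The plan is a forcing construction of $S$ as a limit of finite approximations, in the style of Section \ref{dcsec0}. Two families of requirements must be met: the majorization requirements $Q_k$ asserting that $S \geq \overline{S}_n$ for some $n \geq k$, and the diagonalization requirements $R_e$ asserting that $\Psi_e^S$ is not an infinite path through $T$.

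A forcing condition would take the form $(\tau, F, c)$, where $\tau$ is a finite martingale defined on $\{\rho : |\rho| \leq N\}$ for some $N$, $F \subseteq \omega$ is a finite set of indices committed so far with $\tau \geq \sum_{i \in F} \overline{S}_i$ on the domain of $\tau$, and $c > 0$ is a budget bounding the residual freedom for future commitments. The condition represents the class of martingale extensions of $\tau$ that continue to majorize $\sum_{i \in F} \overline{S}_i$ and respect the budget. Meeting $Q_k$ is routine: because $\overline{S}_n(\bot) \to 0$, we can pick $n \geq k$ with $\overline{S}_n(\bot)$ small enough that $\overline{S}_n(\rho) \leq 2^{|\rho|} \overline{S}_n(\bot)$ is uniformly small on $\tau$'s domain, allowing absorption within a smaller budget.

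Meeting $R_e$ is the heart of the proof and will call for a dedicated lemma analogous to Lemma \ref{dclemmain}. The strategy is by contradiction: assume no extension of the current condition forces $R_e$, so that for every extension there is a further extension and a compatible martingale $S$ with $\Psi_e^S \in [T]$. By iteratively extending to force successive bits of $\Psi_e^S$ to specific values that must then be consistent with $T$, one would produce a computable infinite path through $T$, contradicting the hypothesis on $T$. The combinatorial core is to show that the space of extensions at each stage is rich enough --- in particular, the choice of next index $n_s$ to commit, together with the freedom in extending $\tau$ below level $N$ --- to support the diagonalization uniformly.

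The main obstacle I foresee is ensuring the extracted path is genuinely computable, not merely computable from $\overline{S} := \bigoplus_n \overline{S}_n$. The natural forcing relation depends on exact values of the $\overline{S}_n$, which would only give an $\overline{S}$-computable path. The boundedness hypothesis on each $\overline{S}_n$ ought to be decisive here: it lets the forcing relation be decided from finite rational data --- uniform bounds together with finitely many values --- at the cost of rational approximations that can be controlled. The paper's own remark that ``the coding power of the universal c.e.\ martingale lies in its infinite variance'' is a strong heuristic that boundedness is precisely what decouples $S$'s Turing content from the values of the $\overline{S}_n$, enabling the extracted path to be genuinely computable and so contradict the hypothesis on $T$.
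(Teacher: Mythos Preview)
Your forcing framework and the overall diagonalization strategy---split on whether one can force $\Psi_e^S(m)=0$ versus $\Psi_e^S(m)=1$, and combine the two witnesses to force divergence---are the right shape, and match the paper's Liu-style outline. But there is a concrete missing idea that your proposal does not yet contain, and without it the argument does not close.

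The gap is this: in Case~2 of the diagonalization you will have your current lower bound $S$ together with two auxiliary martingales $S_0,S_1$ (each witnessing that the condition can be strengthened to force $\Psi(m^*)\ne i$), and you must produce a single bounded martingale $S^*$ majorizing all three with $S^*(\rho)<s(\rho)$ on the top level of $s$. With only value bounds on $S_0,S_1$ this is impossible in general: the naive majorant costs roughly $S(\bot)+S_0(\bot)+S_1(\bot)$, which exceeds the budget. The paper's key technical device is to track \emph{variance} alongside value. Lemma~\ref{dclem8} shows that a majorant of $S,S_0,S_1$ exists with initial value only $\max_j S_j(\bot)+\tfrac12\sum_j\sqrt{Var(S_j|\bot)}$; so if $S_0,S_1$ are drawn from $\Pi^0_1$ classes $Q_{m,i}$ that bound both $S'(\rho)$ and $\sqrt{Var(S'|\rho)}$ by fixed rationals at the top level (and $s$ has been pre-padded via Lemma~\ref{dclem4} to leave a $\tfrac32 v(\rho)$ margin), the combined $S^*$ fits under $s$. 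The same variance bound is what makes the classes $Q_{m,i}$ computable $\Pi^0_1$ classes independent of the noncomputable $S$, so that the function $g(m)=$ ``first $i$ with $Q_{m,i}=\emptyset$'' is partial computable and can be played against the homogeneous tree $T$.

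Your proposal correctly identifies the obstacle (decoupling the forcing relation from the noncomputable data) and correctly guesses that boundedness is relevant, but ``finite rational data'' on values alone will not do: a value-bounded martingale can still have arbitrarily large variance below the current level, and then the combination step fails. Also note that the paper's condition carries a single bounded martingale $S$ as the lower bound, not $\sum_{i\in F}\overline S_i$; this is essential because the $S_0,S_1$ absorbed during diagonalization are arbitrary members of $Q_{m^*,i}$, not among the $\overline S_n$.
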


Note that  every open set $U$ of $2^\omega$
give rise to a bounded martingale
$\S_U$ such that $\S_U(\rho) = \m(U\cap [\rho])/\m([\rho])$.
For every sequence of open set $U_n$ with
$\m(U_n)\leq 1/2^n$,
consider the induced bounded martingale
sequence $n\cdot S_{U_n}$.
Clearly $\lim\limits_{n\rightarrow\infty}
n\cdot S_{U_n}(\bot)=0$.
Thus by Theorem \ref{dcth0}
with $\overline{S}_n =
n\cdot S_{U_n}$
where $(U_n:n\in\omega)$
is the universal ML-test
and with $T$ be the homogenous tree
defining PA degree, we directly have:

\begin{corollary}\label{dccoro2}
There exists a $D\in$ High(CR,MLR)
such that $D$ is not of PA degree.

\end{corollary}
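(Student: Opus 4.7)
The plan is an effective forcing construction with conditions of the form $(\sigma,F,\varepsilon)$, where $\sigma$ specifies the martingale $S$ on a finite prefix of $2^{<\omega}$, $F\subseteq\omega$ is the finite set of indices on which we have so far committed $S\geq\overline{\S}_n$, and $\varepsilon>0$ bounds the budget of $S(\bot)$ remaining for future commitments. Extensions enlarge $\sigma$ and $F$ while preserving all the majorization inequalities. Two families of requirements must be met: for each $k$, $\mcal{P}_k\colon |F|\geq k$, and for each Turing functional $\Psi$, $\mcal{R}_\Psi\colon \Psi^S$ is not an infinite path through~$T$.

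The requirements $\mcal{P}_k$ are easy. Since $\overline{\S}_n(\bot)\to 0$, at any condition I can pick a fresh index $n$ with $\overline{\S}_n(\bot)<\varepsilon/2$, absorb the new constraint $S\geq\overline{\S}_n$ into a thickened $\sigma$ at a cost of at most $\overline{\S}_n(\bot)$ of the budget, and add $n$ to $F$. Choosing a summable schedule of budgets ensures $S(\bot)<\infty$ at the end of the construction.

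The heart of the proof is meeting $\mcal{R}_\Psi$. Write the homogeneity data of $T$ as $(B_n)_{n\in\omega}$, so that $\rho\in T$ iff $\rho(n)\in B_n$ for all $n<|\rho|$. Given a condition $c$, I would search for an extension $c'$ of $c$ and an index $n$ such that $c'$ forces $\Psi^G(n)\downarrow$ to a value outside $B_n$; any such $c'$ plainly forces $\mcal{R}_\Psi$. The central claim is that such a $c'$ always exists. Suppose otherwise: no extension of $c$ forces a disagreement with $T$, and no extension forces $\Psi^G(n)\uparrow$ for any $n$. The standard $\Sigma_1^0$ density-of-decision lemma then yields, for each $n$, a computably found extension $c_n\geq c$ and a value $v_n\in\{0,1\}$ such that $c_n$ forces $\Psi^G(n)=v_n$; by our contradictory hypothesis $v_n\in B_n$. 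Then $n\mapsto v_n$ is a computable infinite path through $T$, contradicting the assumption on $T$.

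The main obstacle will be formulating the density-of-decision lemma in the presence of the continuous ``budget'' structure: given a condition $c$ and a $\Sigma_1^0$ statement about $\Psi^G$, I must refine $c$ by a finite amount of martingale bookkeeping to either trigger the statement or rule it out uniformly, while keeping enough freedom in the candidate class of martingales to satisfy the remaining $\mcal{P}_k$. The homogeneity of $T$ is essential here, as it decouples the question of $\Psi^G(n)$ across different $n$ and lets me assemble the hypothetical computable path cleanly; without homogeneity a more delicate tree-pruning argument would be required.
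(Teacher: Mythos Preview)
Your high-level strategy---force either a disagreement $\Psi^G(m)\notin B_m$ or a divergence $\Psi^G(m)\uparrow$, and otherwise assemble a computable selector $m\mapsto v_m\in B_m$ contradicting the hypothesis on $T$---is exactly the shape of the paper's argument. But two essential ingredients are missing, and your sketch does not indicate how to supply them.

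First, your condition format $(\sigma,F,\varepsilon)$ only lets you strengthen the lower bound on the generic martingale by adding more of the \emph{given} martingales $\overline{S}_n$. The paper's conditions are pairs $(s,S)$ where $S$ is an \emph{arbitrary} bounded martingale, and the decisive step (the paper's Case~2) is precisely to manufacture new lower bounds $S_0,S_1$ that were not among the $\overline{S}_n$: one takes $S_i$ from a $\Pi^0_1$ class $Q_{m^*,i}$ of martingales $S'$ for which $(s,S')$ forces $\Psi^{\hat S}(m^*)\ne i$, and then passes to a common refinement $(s,S^*)$ with $S^*\geq\max\{S,S_0,S_1\}$. This is how divergence is forced. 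Your ``no extension forces $\Psi^G(n)\uparrow$'' hypothesis hides the whole difficulty: you never say why such an extension should exist, and in your framework there is no mechanism for producing it.

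Second---and this is the main technical point of the paper---finding such an $S^*$ with $S^*(\bot)<s(\bot)$ is not automatic. The naive majorant of $S,S_0,S_1$ has initial capital roughly $S(\bot)+S_0(\bot)+S_1(\bot)$, which will in general exceed $s(\bot)$. The paper's Lemma~\ref{dclem8} shows that one can instead achieve $S^*(\bot)\leq \max_j S_j(\bot)+\tfrac12\sum_j\sqrt{Var(S_j|\bot)}$, and the conditions are set up in advance (via Lemma~\ref{dclem4}) so that this quantity fits under $s(\rho)$ at each leaf $\rho$. The $\Pi^0_1$ classes $Q_{m,i}$ are defined with explicit rational bounds on both $S'(\rho)$ and $\sqrt{Var(S'|\rho)}$ precisely so that (a) emptiness of $Q_{m,i}$ is $\Sigma^0_1$, making the diagonal function $g(m)$ partial computable, and (b) any members $S_0,S_1$ extracted in Case~2 have controlled variance and hence can be majorized within budget. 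Your density-of-decision lemma, even if it could be stated, would have to search over conditions containing the non-computable data $\overline{S}_m$; the paper avoids this by replacing the specific lower bound $S$ with a computable envelope (value and variance caps) and working with the resulting $\Pi^0_1$ classes. Without the variance lemma and this $\Pi^0_1$ reformulation, the argument does not go through.
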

Another corollary is that 
$\leq_{\msf{CR}}$ does not imply 
$\leq_T$.

\begin{corollary}\label{dccoro1}
For every incomputable set $C$, there exists
a $D\ngeq_T C$ such that $\msf{CR}^D\subseteq \msf{MLR}^C$.
\end{corollary}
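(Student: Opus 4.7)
The plan is to derive Corollary \ref{dccoro1} directly from Theorem \ref{dcth0} by feeding it the $C$-relativized universal ML-test (rescaled) together with a homogeneous tree whose only infinite path is $C$ itself. Fix incomputable $C$ and let $(U^C_n:n\in\omega)$ be the universal ML-test relative to $C$, so $\m(U^C_n)\leq 2^{-n}$ and $\bigcap_n U^C_n$ is exactly the complement of $\msf{MLR}^C$. Following the recipe used to derive Corollary \ref{dccoro2}, set $\overline{\S}_n(\rho) = n\cdot \m(U^C_n\cap[\rho])/\m([\rho])$; each $\overline{\S}_n$ is a martingale bounded by $n$ with initial capital at most $n\cdot 2^{-n}$, which tends to $0$.

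To encode $C$ into a homogeneous tree, take $B_n=\{C(n)\}$ for all $n$, so $T=\{C\uhr_k:k\in\omega\}$ has $C$ as its unique infinite path. Since $C$ is incomputable, $T$ has no computable infinite path, so the hypothesis of Theorem \ref{dcth0} is met. Applying it yields a martingale $S$ such that $S\geq \overline{\S}_n$ for infinitely many $n$ and such that $S$ does not compute any infinite path through $T$; equivalently, $S$ does not compute $C$. Take $D$ to be a natural oracle coding $S$, so that $S$ is $D$-computable; by Theorem \ref{dcth0}, $D\not\geq_T C$.

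It remains to verify $\msf{CR}^D\subseteq \msf{MLR}^C$. Suppose $X\notin \msf{MLR}^C$; then $X\in U^C_n$ for every $n$. Let $I$ be the infinite set of indices on which $S\geq \overline{\S}_n$. For each $n\in I$, openness of $U^C_n$ gives $[X\uhr_k]\subseteq U^C_n$ for all sufficiently large $k$, whence $\overline{\S}_n(X\uhr_k)=n$ and therefore $S(X\uhr_k)\geq n$. Letting $n$ range over $I$ yields $\limsup_k S(X\uhr_k)=\infty$, so the $D$-computable martingale $S$ succeeds on $X$; by definition $X\notin \msf{CR}^D$, as required.

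The corollary is thus an immediate application of Theorem \ref{dcth0}; I do not foresee any serious obstacle, as all the heavy lifting is done inside the theorem. The one point worth flagging is that here the input martingales $\overline{\S}_n$ are only $C$-computable rather than c.e., but this is precisely what is accommodated by the ``not necessarily c.e.'' clause in the theorem's hypothesis, so the construction goes through unchanged.
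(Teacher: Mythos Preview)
Your proposal is correct and follows exactly the same approach as the paper: take $\overline{S}_n=n\cdot S_{U_n^C}$ and $T=\{C\uhr_k:k\in\omega\}$, then apply Theorem \ref{dcth0}. You have simply spelled out the verification that $\msf{CR}^D\subseteq\msf{MLR}^C$, which the paper leaves implicit.
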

\begin{proof}
Let $(U_n^C:n\in\omega)$ be the universal ML-test relative to
$C$ and let $\overline{S}_n = n\cdot S_{U^C_n}$;
let $T = \{C\uhr_n\}_{n\in\omega}$.
Thus the conclusion follows directly from Theorem \ref{dcth0}.

\end{proof}

The bounddeness restriction in Theorem \ref{dcth0}
might not be necessary but is for the sake of convenience
(see the end of section \ref{dcsecprob}).
Now we prove Theorem \ref{dcth0}.
Let's first recall some probability notions.
The main object of section \ref{dcsecprob}
is Lemma \ref{dclem8}.
\subsection{Majorize
multiple martingales with small variance}
\label{dcsecprob}
We address the question that
given $k$
martingales $S_0,\cdots,S_{k-1}$, a
$c>\max\limits_{j<k}\{S_j(\bot)\}$,
under what condition does there guarantee a
martingale $S^*$ majorizing  $\max\limits_{j<k}\{S_j(\rho)\}$
while $S^*(\bot)<c$.
The condition $c>\sum\limits_{j<k}S_j(\bot)$
is too strong for our application.
For a martingale
$\S$, a $\rho\in 2^{<\omega}$,
let
$$
Var(S|\rho) =
\lim\limits_{t\rightarrow\infty}
\frac{1}{2^{t-|\rho|}}
\sum\limits_{\sigma\in [\rho]^\preceq\cap 2^t}
(S(\sigma)-S(\rho))^2.
$$
We show that a sufficient condition is that
$c>\max\limits_{j<k}\{S_j(\bot)\}+
\frac{1}{2}\sum\limits_{j<k}\sqrt{Var(S_j|\bot)}$.

If $S$ is martingale,
by variance decomposition we have:
$$
Var(S|\rho)
= \frac{1}{2}(Var(S|\rho0)
 +Var(S|\rho1))
 +\frac{1}{2}
 ((S(\rho0)-S(\rho))^2+
 (S(\rho1)-S(\rho))^2).
$$
i.e., $Var(S|\cdot)$
(as a function $\rho\mapsto\mathbb{R}^{\geq 0}$)
is a supermartingale.
Also note that if $Var(S|\bot)<\infty$, then
\begin{align}\label{dceq2}
&\lim\limits_{t\rightarrow\infty}
\frac{1}{2^{t-|\rho'|}}
\sum\limits_{\rho\in 2^t\cap [\rho']^\preceq}Var(S|\rho)
=0 \text{ for all $\rho'$}.
\end{align}

Since $\sqrt{x}$
is concave, for every non negative random variable
$x$, $\sqrt{\mathbb{E}[x]}
\geq \mathbb{E}[\sqrt{x}]$.
Combine with
 (\ref{dceq2}),
we have that:
\begin{align}\label{dceq3}
&
\lim\limits_{t\rightarrow\infty}
\frac{1}{2^{t-|\rho'|}}
\sum\limits_{\rho\in 2^t\cap [\rho']^\preceq}\sqrt{Var(S|\rho)}
=0
\text{ for all $\rho'$}.
\end{align}

\begin{lemma}\label{dclem4}
Given
a martingale $S'$ with $Var(S'|\bot)<\infty$
and a $c> S'(\bot)$.
For every $\h{c}$, there exists a martingale
$S$ such that:
\begin{itemize}
\item $S(\bot)< c$
and $S(\rho)>
S'(\rho)$
for all $\rho\in 2^{<\omega}$;
\item For some $t\in\omega$,
$S(\rho)>
S'(\rho)
+\h{c}\cdot\sqrt{Var(S'|\rho)}$
for all $\rho\in 2^{t}$.

\end{itemize}
\end{lemma}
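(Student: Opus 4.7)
Set $\epsilon := c - S'(\bot) > 0$. The plan is to construct a non-negative martingale $N$ with $N(\bot) < \epsilon$, $N(\rho) > 0$ everywhere, and $N(\rho) > \hat{c}\sqrt{Var(S'|\rho)}$ for every $\rho$ on some fixed level $t$; then $S := S' + N$ will do, since it is the sum of two martingales and the three bulleted requirements translate directly into the three corresponding properties of $N$.

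First I would observe that $M(\rho) := \sqrt{Var(S'|\rho)}$ is itself a non-negative supermartingale. The excerpt already notes that $Var(S'|\cdot)$ is a supermartingale, and since $\sqrt{\cdot}$ is concave,
\[
\sqrt{Var(S'|\rho)} \;\geq\; \sqrt{\tfrac{1}{2}\bigl(Var(S'|\rho 0) + Var(S'|\rho 1)\bigr)} \;\geq\; \tfrac{1}{2}\bigl(\sqrt{Var(S'|\rho 0)} + \sqrt{Var(S'|\rho 1)}\bigr).
\]
Since $Var(S'|\bot) < \infty$, equation (\ref{dceq3}) applied at $\rho' = \bot$ yields $\lim_{t\to\infty} 2^{-t}\sum_{\rho \in 2^t} M(\rho) = 0$, so I fix $t$ large enough that $\hat{c}\cdot 2^{-t}\sum_{\rho \in 2^t} M(\rho) < \epsilon/2$.

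Next, define $f : 2^t \to \mathbb{R}^{> 0}$ by $f(\rho) := \hat{c}\,M(\rho) + \epsilon/2$, so that $2^{-t}\sum_{\rho \in 2^t} f(\rho) < \epsilon$. Extend $f$ to a function $N$ on all of $2^{<\omega}$ by averaging above level $t$ and copying past it: $N(\rho) := 2^{-(t-|\rho|)}\sum_{\sigma \in 2^{t-|\rho|}} f(\rho\sigma)$ when $|\rho| \leq t$, and $N(\rho) := f(\rho\uhr_t)$ when $|\rho| > t$. A routine check confirms that $N$ is a martingale: at levels $\leq t$ it is the conditional expectation of $f$ with respect to the dyadic filtration, and below level $t$ it is constant on each subtree rooted at a node of length $t$. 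By construction, $N(\bot) < \epsilon$; $N > 0$ everywhere since $f > 0$; and $N(\rho) = f(\rho) > \hat{c}\,M(\rho)$ for every $\rho \in 2^t$. Setting $S := S' + N$ then gives a martingale satisfying all three requirements.

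I do not foresee a substantive obstacle. The only move beyond direct verification is the concavity step that turns $\sqrt{Var(S'|\cdot)}$ into a supermartingale, which combined with (\ref{dceq3}) makes the average of $\hat{c}\,M$ on level $t$ arbitrarily small; this is precisely what lets the $\epsilon$-budget for $N(\bot)$ absorb the $\hat{c}\sqrt{Var(S'|\rho)}$ cushion prescribed at level $t$.
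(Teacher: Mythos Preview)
Your proof is correct and essentially the same as the paper's: both pick $t$ via (\ref{dceq3}) so that the level-$t$ average of $\hat c\sqrt{Var(S'|\cdot)}$ is small, then add to $S'$ the martingale that is constant below level $t$ with those prescribed values (plus a small positive buffer for the strict inequality). The paper splits this into two steps (first a martingale dominating $S'$ with slack, then the level-$t$ bump $\hat S$), while you build $N$ in one shot; your extra remark that $\sqrt{Var(S'|\cdot)}$ is a supermartingale is true but unnecessary once you invoke (\ref{dceq3}).
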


\begin{proof}
It's trivial to see that
a martingale $S$ satisfying the first  item
exists.
Suppose
$S(\bot)< c-\delta$ for some $\delta>0$.
By (\ref{dceq3}),
there exists a $t$ such that
$$\frac{\h{c}}{2^t}\sum\limits_{\rho\in 2^t}
\sqrt{Var(S_j|\rho)}<\delta.$$
Let $\h{S}$ be the unique martingale such
that for every $\rho\in 2^t$
and $\sigma\in [\rho]^\preceq$,
$\h{S}(\sigma)= \h{S}(\rho) =
\h{c}\cdot\sqrt{Var(S_j|\rho)}$.
Clearly $\h{S}(\bot)<\delta$.
Thus $\h{S}+S$
satisfy all items.

\end{proof}

\begin{lemma}\label{dclem9}
For any random vector $\v{x}$
(not necessarily with mutually independent component),
\begin{align}\label{dceq5}
\mathbb{E}[\max\limits_{j<k}\{\v{x}(j)\}]-\max\limits_{j<k}\{\mathbb{E}[\v{x}(j)]\}\leq
\frac{1}{2}\sum\limits_{j<k}\sqrt{Var(\v{x}(j))}.
\end{align}
\end{lemma}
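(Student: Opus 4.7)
The plan is to decompose the problem into two elementary steps: first, bound the centered maximum by a sum of one-sided deviations; second, bound each one-sided deviation via the standard $L^1\leq L^2$ inequality for centered random variables.

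Write $\mu_j=\mathbb{E}[\v{x}(j)]$ and $\mu^*=\max_{j<k}\mu_j$. The first step I would carry out is the pointwise inequality $\max_{j<k}\v{x}(j)-\mu^*\leq\sum_{j<k}(\v{x}(j)-\mu_j)^+$, obtained by chaining the trivial fact $\max_j a_j\leq\sum_j a_j^+$ (valid in both cases $\max_j a_j\geq 0$ and $\max_j a_j<0$) with the observation that $\v{x}(j)-\mu^*\leq\v{x}(j)-\mu_j$ since $\mu_j\leq\mu^*$. Taking expectations reduces the lemma to showing $\mathbb{E}[(\v{x}(j)-\mu_j)^+]\leq\frac{1}{2}\sqrt{Var(\v{x}(j))}$ separately for each $j$.

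For this last inequality, I would use that if $y$ is centered then $\mathbb{E}[y^+]=\mathbb{E}[y^-]$, so $\mathbb{E}[y^+]=\frac{1}{2}\mathbb{E}[|y|]$, and then apply Cauchy--Schwarz (equivalently, Jensen for $x\mapsto x^2$) to get $\mathbb{E}[|y|]\leq\sqrt{\mathbb{E}[y^2]}$. There is essentially no obstacle here; notably the argument uses no hypothesis on the joint distribution of the coordinates of $\v{x}$, which is consistent with the statement's explicit remark that the components need not be mutually independent.
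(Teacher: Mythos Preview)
Your proof is correct and takes a genuinely different route from the paper's.

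The paper first reduces (without loss of generality) to the case where all coordinates have the same mean $\mu$, then conditions on the events $A_j=\{\v{x}(j)\text{ is the maximum}\}$. Writing $\mathbb{E}[\v{x}(j)\mid A_j]=\mu+a_j$ and $\mathbb{E}[\v{x}(j)\mid\overline{A}_j]=\mu-b_j$, it gets $\mathbb{P}(A_j)=b_j/(a_j+b_j)$ and, by decomposition of variance, $Var(\v{x}(j))\geq a_jb_j$. Then $\mathbb{E}[\max_j\v{x}(j)]-\mu=\sum_j\mathbb{P}(A_j)a_j=\sum_j\frac{a_jb_j}{a_j+b_j}\leq\frac{1}{2}\sum_j\sqrt{a_jb_j}\leq\frac{1}{2}\sum_j\sqrt{Var(\v{x}(j))}$, using the AM--GM inequality $\sqrt{ab}/(a+b)\leq 1/2$.

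Your argument is more elementary: the pointwise bound $\max_j a_j\leq\sum_j a_j^+$ together with $\mathbb{E}[y^+]=\tfrac{1}{2}\mathbb{E}|y|\leq\tfrac{1}{2}\sqrt{\mathbb{E}[y^2]}$ for centered $y$ gives the result directly, with no WLOG reduction, no conditioning, and no variance decomposition. The paper's approach, by isolating the contribution of each coordinate on the event where it realizes the maximum, is closer in spirit to a sharpness analysis (one sees how the bound could be tight, namely when $a_j=b_j$ and the within-group variances vanish), whereas your approach buys brevity and transparency. Both land on the same constant $\tfrac{1}{2}$ and neither uses any joint-distribution hypothesis.
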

\begin{proof}
Without loss of generality,
we may assume that $\mathbb{E}[\v{x}(j)] = \mu$
for all $j<k$.
Let $A_j$ denote the event
$\{\v{x}(j) = \max\limits_{j'<k}\{\v{x}(j')\}\wedge
\v{x}(j)>\v{x}(j')\text{ for all }j'<j\}$.
Note that $\cup_j A_j$ is the whole space.
Suppose $\mathbb{E}[\v{x}(j)|A_j] = \mu+a_j$
and $\mathbb{E}[\v{x}(j)|\overline{A}_j] = \mu-b_j$.
Clearly $\mathbb{P}(A_j) = \frac{b_j}{a_j+b_j}$.
By decomposition of variance:
\begin{align}\nonumber
Var(\v{x}(j)) &= \mathbb{P}(A_j)
[Var(\v{x}(j)|A_j)+ a_j^2]+
\mathbb{P}(\overline{A}_j)
[Var(\v{x}(j)|\overline{A}_j)+ b_j^2]\\ \nonumber
&\geq \frac{b_j}{a_j+b_j}a_j^2+
\frac{a_j}{a_j+b_j}b_j^2\\ \nonumber
&=a_jb_j
\end{align}

Meanwhile, since $\frac{\sqrt{a_jb_j}}{a_j+b_j}\leq 1/2$,
\begin{align}\nonumber
\mathbb{E}[\max\limits_{j<k}\{\v{x}(j)\}]
-\max\limits_{j<k}\{\mathbb{E}[\v{x}(j)]\}&=
\sum\limits_{j<k}\mathbb{P}(A_j)a_j\\ \nonumber
&=\sum\limits_{j<k} \frac{\sqrt{a_jb_j}}{a_j+b_j}\sqrt{a_jb_j}
\\ \nonumber
&\leq \frac{1}{2}\sum\limits_{j<k}\sqrt{Var(\v{x}(j))}.
\end{align}

\end{proof}
The following Lemma \ref{dclem7}
is key to our proof.
\begin{definition}
For vectors $\v{\mu}\in\mathbb{R}^k,
\v{v}\in (\mathbb{R}^{\geq 0})^k$,
define 
$
f(\v{\mu},\v{v})$ to be the supreme of
$
\mathbb{E}[\max\limits_{j<k}\{\v{x}(j)\}]$
where the supreme is taken over all random vector
$\v{x}$ such that
$\mathbb{E}[\v{x}] = \v{\mu}$ and
$Var(\v{x}(j)) = \v{v}(j)^2$ for all $j<k$.

\end{definition}

\begin{lemma}\label{dclem7}
For every
$\v{\mu}\in\mathbb{R}^k,
\v{v}\in (\mathbb{R}^{\geq 0})^k$ we have:
\begin{enumerate}

\item $\max\limits_{j<k}\{\v{\mu}(j)\}\leq f(\v{\mu},\v{v})\leq
\max\limits_{j<k}\{\v{\mu}(j)\}+\frac{1}{2}
\sum\limits_{j<k}\v{v}(j)$;
\item Suppose $\v{\mu}_i,\v{v}_i,i\in 2$
satisfy $\v{\mu} = \frac{1}{2}
\sum\limits_{i\in 2}\v{\mu}_i$,
$\v{v}(j)^2= \frac{1}{2}
\sum\limits_{i\in 2}
\big[\v{v}_i(j)^2+
(\v{\mu}_i(j)-\v{\mu}(j))^2\big]$
for all $j<k$.
Then we have:
$\frac{1}{2}(f(\v{\mu}_0,\v{v}_0)+f(\v{\mu}_1,\v{v}_1))
\leq f(\v{\mu},\v{v})$.

\end{enumerate}
\end{lemma}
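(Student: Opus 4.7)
The plan is to handle the two items separately, both by elementary means. For item (1), the lower bound is immediate: for any admissible $\v{x}$ and any fixed $j<k$, we have $\max_{j'<k}\{\v{x}(j')\}\geq \v{x}(j)$, so taking expectations and then the max over $j$ gives $\mathbb{E}[\max_{j'<k}\v{x}(j')]\geq \max_{j<k}\v{\mu}(j)$; taking supremum over $\v{x}$ yields $f(\v{\mu},\v{v})\geq \max_{j<k}\v{\mu}(j)$ (and one checks admissible $\v{x}$ exists, e.g.\ the coordinatewise Rademacher-shifted vector $\v{x}(j)=\v{\mu}(j)+\v{v}(j)\varepsilon$). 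The upper bound is precisely Lemma \ref{dclem9} applied to any admissible $\v{x}$, followed by taking supremum.

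For item (2), I will use a mixing construction. Fix $\delta>0$ and pick random vectors $\v{x}_0,\v{x}_1$ achieving
$\mathbb{E}[\max_j\v{x}_i(j)]>f(\v{\mu}_i,\v{v}_i)-\delta$ with $\mathbb{E}[\v{x}_i]=\v{\mu}_i$, $\mathrm{Var}(\v{x}_i(j))=\v{v}_i(j)^2$. Let $I$ be a Bernoulli($1/2$) random variable independent of $(\v{x}_0,\v{x}_1)$ and set $\v{x}=\v{x}_I$. Then $\mathbb{E}[\v{x}(j)]=\tfrac12(\v{\mu}_0(j)+\v{\mu}_1(j))=\v{\mu}(j)$, and the law-of-total-variance identity
\[
\mathrm{Var}(\v{x}(j))=\mathbb{E}[\mathrm{Var}(\v{x}(j)\mid I)]+\mathrm{Var}(\mathbb{E}[\v{x}(j)\mid I])
=\tfrac12\sum_{i\in 2}\v{v}_i(j)^2+\tfrac12\sum_{i\in 2}(\v{\mu}_i(j)-\v{\mu}(j))^2
\]
reproduces exactly the stated formula for $\v{v}(j)^2$. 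Hence $\v{x}$ is admissible for $(\v{\mu},\v{v})$, so
\[
f(\v{\mu},\v{v})\geq \mathbb{E}[\max_j\v{x}(j)]=\tfrac12\sum_{i\in 2}\mathbb{E}[\max_j\v{x}_i(j)]>\tfrac12(f(\v{\mu}_0,\v{v}_0)+f(\v{\mu}_1,\v{v}_1))-\delta.
\]
Letting $\delta\to 0$ yields item (2).

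The only mildly subtle point is the variance identity; I expect this to be the main thing to check carefully, but it reduces to noting $\v{\mu}_i(j)-\v{\mu}(j)=\pm\tfrac12(\v{\mu}_0(j)-\v{\mu}_1(j))$, so $\tfrac12\sum_i(\v{\mu}_i(j)-\v{\mu}(j))^2=\tfrac14(\v{\mu}_0(j)-\v{\mu}_1(j))^2$, which matches $\tfrac12(\v{\mu}_0(j)^2+\v{\mu}_1(j)^2)-\v{\mu}(j)^2$ as required. A minor technical issue is whether the supremum in the definition of $f$ is attained — if not, one works with $\delta$-optimal $\v{x}_i$ as above, which is exactly what the proof does.
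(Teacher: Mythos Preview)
Your proof is correct and follows essentially the same route as the paper's: item (1) uses the pointwise inequality $\max_{j'}\v{x}(j')\geq \v{x}(j)$ for the lower bound and Lemma \ref{dclem9} for the upper bound, and item (2) uses the same Bernoulli-$1/2$ mixing of near-optimal $\v{x}_0,\v{x}_1$ together with the law of total variance. The only cosmetic difference is that the paper phrases item (2) as a proof by contradiction rather than your direct $\delta\to 0$ argument.
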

\begin{proof}
For item (1): Note that for every random vector $\v{x}$,
it holds trivially that $\mathbb{E}[\max\limits_{j}\{\v{x}(j)\}]
\geq \max\limits_{j}\{\mathbb{E}(\v{x}(j))\}$,
which gives the first inequality.
The second inequality follows by (\ref{dceq5}).

For item (2):
Suppose otherwise with $\v{\mu}_i,\v{v}_i,\v{\mu},\v{v},i\in 2$
being a witness.
Let $\v{x}_i,i\in 2$ be random vectors such that
$\mathbb{E}(\v{x}_i) = \v{\mu}_i,
Var(\v{x}_i(j)) = \v{v}_i(j)$ and
$\mathbb{E}[\max\limits_{j}\{\v{x}_i(j)\}]
>f(\v{\mu}_i,\v{v}_i)-\delta$ for all $i\in 2$.
Where $\delta>0$ satisfy
$\frac{1}{2}(f(\v{\mu}_0,\v{v}_0)+f(\v{\mu}_1,\v{v}_1))
> f(\v{\mu},\v{v})+\delta$.
Let $z\sim Bin(1,1/2)$ where $Bin(n,p)$ is the Bernoulli
distribution
and let $\v{x}$ be such a random vector that
conditional on $z=i$, $\v{x}\sim Law(\v{x}_i)$ for all $i\in 2$
(where $Law(\v{x}_i)$ denote the distribution of $\v{x}_i$).
Clearly
\begin{align}\nonumber
\mathbb{E}[\max\limits_{j}\{\v{x}(j)\}]
 &= \frac{1}{2}
 \sum\limits_{i\in 2}\mathbb{E}[\max\limits_{j}\{\v{x}_i(j)\}]
 \\ \nonumber
 &>\frac{1}{2}(f(\v{\mu}_0,\v{v}_0)+f(\v{\mu}_1,\v{v}_1))-\delta
 \\ \nonumber
 &> f(\v{\mu},\v{v})+\delta-\delta\\ \nonumber
 &=f(\v{\mu},\v{v}).
 \end{align}
 Meanwhile,
 $\mathbb{E}[\v{x}] = \frac{1}{2}
 \sum\limits_{i\in 2}\mathbb{E}[\v{x}_i]
  = \frac{1}{2}
 \sum\limits_{i\in 2}\v{\mu}_i
 =\v{\mu}$,
 and
 \begin{align}\nonumber
 Var(\v{x}(j)) &=
 \frac{1}{2}\sum\limits_{i\in 2}Var(\v{x}_i(j))
 +\frac{1}{2}\sum\limits_{i\in 2}
 (\v{\mu}_i(j)-\v{\mu}(j))^2
  \\ \nonumber
  &= \frac{1}{2}
\sum\limits_{i\in 2}
\big[\v{v}_i(j)^2+
(\mathbb{E}[\v{x}_i(j)]-\mathbb{E}[\v{x}(j)])^2\big]
= \v{v}(j)^2
\end{align}

 A contradiction to the definition of
 $f(\v{\mu},\v{v})$.

\end{proof}

\begin{lemma}\label{dclem8}
Given $k$  martingales $S_0,\cdots,S_{k-1}$,
a $c>\max\limits_{j<k}\{S_j(\bot)\}+\frac{1}{2}
\sum\limits_{j<k}\sqrt{Var(S_j|\bot)}$,
there exists a  martingale
$S^*$ with $S^*(\bot)< c$ such that
$S^*(\rho)>\max\limits_{j<k}\{S_j(\rho)\}$
for all $\rho\in 2^{<\omega}$.
Moreover, $S^*$ could be bounded if each $S_j$ is bounded.
\end{lemma}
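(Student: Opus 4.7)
The plan is to use the envelope function $f$ from Lemma \ref{dclem7} as a bridge: build a supermartingale $\bar S$ that automatically majorizes $\max_{j<k}S_j$ and whose root value is below $c$, then lift it to a strictly dominating martingale $S^*$ by a tiny initial offset plus the standard redistribution of the supermartingale defect.

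First I would define
\[\bar S(\rho)\;:=\;f\!\bigl(\vec S(\rho),\vec v(\rho)\bigr),\qquad \vec S(\rho)=(S_j(\rho))_{j<k},\ \ \vec v(\rho)=\bigl(\sqrt{Var(S_j|\rho)}\bigr)_{j<k},\]
and verify that $\bar S$ is a supermartingale. Fix any $\rho$ and set $\vec\mu_i=\vec S(\rho i)$, $\vec v_i(j)=\sqrt{Var(S_j|\rho i)}$ for $i\in 2$. The martingale property of each $S_j$ gives $\vec S(\rho)=\tfrac12(\vec\mu_0+\vec\mu_1)$, while the variance decomposition displayed just before Lemma \ref{dclem4} gives
\[Var(S_j|\rho)\;=\;\tfrac12\sum_{i\in 2}\bigl[Var(S_j|\rho i)+(S_j(\rho i)-S_j(\rho))^2\bigr],\]
which are precisely the hypotheses of Lemma \ref{dclem7}(2). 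Therefore $\tfrac12(\bar S(\rho 0)+\bar S(\rho 1))\leq\bar S(\rho)$. Lemma \ref{dclem7}(1) then simultaneously yields $\bar S(\rho)\geq\max_{j<k}S_j(\rho)$ for every $\rho$ and, at the root, $\bar S(\bot)\leq\max_{j<k}S_j(\bot)+\tfrac12\sum_{j<k}\sqrt{Var(S_j|\bot)}<c$.

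To upgrade $\bar S$ to a strictly dominating martingale with the same root bound, fix $\delta>0$ with $\bar S(\bot)+\delta<c$, set $S^*(\bot)=\bar S(\bot)+\delta$, and inductively put
\[S^*(\rho i)\;=\;\bar S(\rho i)+\bigl(S^*(\rho)-\bar S(\rho)\bigr)+\bigl(\bar S(\rho)-\tfrac12(\bar S(\rho 0)+\bar S(\rho 1))\bigr),\quad i\in 2.\]
A one line check gives $\tfrac12(S^*(\rho 0)+S^*(\rho 1))=S^*(\rho)$, so $S^*$ is a martingale; induction gives $S^*(\rho)\geq\bar S(\rho)+\delta>\max_{j<k}S_j(\rho)$ for every $\rho$, and $S^*(\bot)<c$ by construction.

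For the moreover clause, if each $S_j$ is bounded by $B$ then $(S_j(\sigma)-S_j(\rho))^2\leq B^2$ forces $Var(S_j|\rho)\leq B^2$, and so $\bar S\leq B+\tfrac{kB}{2}$ is bounded. The redistribution above need not preserve boundedness however, because the accumulated supermartingale defect $\sum_{\sigma\prec\rho}\bigl(\bar S(\sigma)-\tfrac12(\bar S(\sigma 0)+\bar S(\sigma 1))\bigr)$ only telescopes in expectation against the uniform measure on $2^\omega$, not pointwise. This is the main obstacle, and I would resolve it by running the redistribution only down to a large finite depth $t$, chosen via (\ref{dceq3}) so that the residual defect beyond $t$ is negligible, and on each subtree rooted at a $\rho\in 2^t$ gluing in a bounded strict majorant of $\max_{j<k}S_j$ produced by a single application of Lemma \ref{dclem4} to a martingale that absorbs $\bar S$ there plus a tiny per-subtree budget. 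Showing that the cumulative extra budget required across the $2^t$ subtrees stays strictly below the global slack $c-\bar S(\bot)$ is the delicate verification.
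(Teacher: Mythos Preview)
Your approach is exactly the paper's: define $\bar S(\rho)=f(\vec S(\rho),\vec v(\rho))$, verify via Lemma~\ref{dclem7}(2) together with the variance decomposition that $\bar S$ is a supermartingale, bound $\bar S(\bot)$ and $\bar S(\rho)$ by Lemma~\ref{dclem7}(1), and then pass to a strictly dominating martingale. The paper is terser than you and simply asserts that such an $S^*$ exists (and can be taken bounded when $\bar S$ is bounded), treating the supermartingale-to-martingale passage as routine.

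You are right that your equal-split redistribution need not stay bounded: for instance the bounded supermartingale $\bar S(0^n)=1$, $\bar S\equiv 0$ off the leftmost path, has defect $1/2$ at every $0^n$, so along $0^\omega$ the accumulated excess diverges. However, your proposed repair has a gap. Lemma~\ref{dclem4} takes a single \emph{martingale} as input, and the phrase ``a martingale that absorbs $\bar S$ there'' is exactly what you are trying to build---producing a bounded martingale $\geq\bar S$ on the subtree is the problem at hand. Invoking (\ref{dceq3}) only controls the \emph{average} of $\sqrt{Var(S_j|\rho)}$ over $\rho\in 2^t$, not the per-node budget you would need to recurse, so the ``delicate verification'' you allude to does not go through as stated.

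The clean fix is much simpler: redistribute the defect \emph{unequally} so as never to overshoot the bound $M$ on $\bar S$. Set $S^{**}(\bot)=\bar S(\bot)$ and inductively, given $S^{**}(\rho)\in[\bar S(\rho),M]$, choose $S^{**}(\rho i)\in[\bar S(\rho i),M]$ with $S^{**}(\rho0)+S^{**}(\rho1)=2S^{**}(\rho)$. This is possible because $\bar S(\rho0)+\bar S(\rho1)\leq 2\bar S(\rho)\leq 2S^{**}(\rho)\leq 2M$ and each $\bar S(\rho i)\leq M$; concretely, $S^{**}(\rho0)=\min\bigl(M,\,2S^{**}(\rho)-\bar S(\rho1)\bigr)$ and $S^{**}(\rho1)=2S^{**}(\rho)-S^{**}(\rho0)$ works. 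Then $S^{**}$ is a martingale with $\bar S\leq S^{**}\leq M$ and $S^{**}(\bot)=\bar S(\bot)$; now take $S^*=S^{**}+\epsilon$ for small $\epsilon>0$ to get $S^*(\bot)<c$, $S^*>\bar S\geq\max_{j<k}S_j$, and $S^*\leq M+\epsilon$ bounded.
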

\begin{proof}
Let $\v{\S}=(\S_0,\cdots,\S_{k-1})$,
 we write $\v{S}(\rho)$
for $(\S_0(\rho),\cdots,\S_{k-1}(\rho))$;
we write $\sqrt{Var}(\v{S}|\rho)$
for
$(\sqrt{Var}(S_0|\rho),\cdots,
\sqrt{Var}(S_{k-1}|\rho))$.
Consider the function
$\rho\mapsto f(\v{S}(\rho),\sqrt{Var}(\v{S}|\rho))$.
By Lemma \ref{dclem7} item (2),
$f(\v{S}(\rho),\sqrt{Var}(\v{S}|\rho))$
is a supermartingale.
Moreover, by Lemma \ref{dclem7} item (1),
$f(\v{S}(\rho),\sqrt{Var}(\v{S}|\rho))$
is bounded if $S_j$ is bounded for each $j<k$
(note that $S$ is bounded implies
$Var(S|\rho)$ is bounded).
By Lemma \ref{dclem7} item (1),
$f(\v{S}(\bot),\sqrt{Var}(\v{S}|\bot))
<c$. Thus there exists
a martingale $S^*$ such that
$S^*(\bot)<c$ and
$S^*(\rho)>f(\v{S}(\rho),\sqrt{Var}(\v{S}|\rho))$
for all $\rho\in 2^{<\omega}$.
Moreover, $S^*$ could be bounded if
$S_j$ is bounded for each $j<k$.
Meanwhile, by Lemma \ref{dclem7} item (1),
$S^*(\rho)>f(\v{S}(\rho),\sqrt{Var}(\v{S}|\rho))
\geq \max\limits_{j<k}
\{S_j(\rho)\}$ for all $\rho\in 2^{<\omega}$.
Thus we are done.

\end{proof}

To remove the boundedness restriction in Theorem \ref{dcth0},
we need to prove that $S^*$ could have finite variance
without the boundedness of $S_0,\cdots,S_{j-1}$.
This is entirely possible.
\subsection{Proof of Theorem \ref{dcth0}}
\label{dcsecproofth0}
Again, the frame work of the proof follows the usual
line in computability theory. We build a sequence of
conditions each forces some requirement and
thus the martingale $\h{S}$
we construct will satisfy all needed requirements.
We firstly define the condition we use then
we focus on how to force a given requirement
(Lemma \ref{dclemmain}).
Readers who  are familiar with representation
of reals and martingales can skip the following
introduction on this representation.

For every $x\in\mathbb{R}^{\geq 0}$,
we represent $x$
as an $X=(b,a_0a_1\cdots)
\in \omega\times 2^{\omega}\cup
\omega\times 2^{<\omega}$
so that: 1)
 $x=\sum\limits_{|X|> n\geq 1}X(n)\cdot 2^{X(0)-n}$
 where $X(0) = b, X(n) = a_{n-1}$;
 2) $b>0\rightarrow a_0=1$.
 The second item means that $1/2$ cannot be represented
 as $(2,00100\cdots)$ or $(1,0100\cdots)$.
 Conversely every element of $\omega\times 2^{\omega}\cup
\omega\times 2^{<\omega}$ is seen as a real.
Meanwhile, given a real, its representation is not unique.
For example, $1/2$ can be represented as
$(0,100\cdots)$ and $(0,011\cdots)$.
An initial segment of
a $X\in \omega\times 2^\omega$
with length $n$
is a $\rho\in \omega\times 2^n$
such that $\rho =
X\uhr_n$.
An initial segment of
a function $\S:2^{<\omega}\rightarrow \omega\times2^\omega$
 is a function $s$ with $dom(s)=2^{\leq n} $ for some
 $n$
 such that for every $\rho\in 2^{\leq n}$, $s(\rho)$
 is an initial segment of $\S(\rho)$.
 We still use $\succeq$
  to denote extension relation in these
   spaces.
Clearly
\begin{itemize}
\item $\{Y\in \omega\times 2^\omega:
X_1\geq Y\geq X_0\}$
is a $\Pi_1^{0,X_0\oplus X_1}$ class
uniformly in $X_0\oplus X_1$;

\item
$\{\h{S}:
\h{S}\text{ is a martingale and }
\h{\S}\leq Y\wedge Var(\h{S}|\bot)\leq X\}$
is a $\Pi_1^{0,X\oplus Y}$ class uniformly in $X\oplus Y$.

\item
$\{\h{S}:
\h{S}\text{ is a martingale and }
\h{\S}\succ s\wedge
\h{S}\geq S\}$
is a $\Pi_1^{0,\S}$ class
uniformly in $s, \S$
where $S$
is a function $2^{<\omega}
\rightarrow \omega\times 2^\omega$,
$s$ is an initial segment of some function.

\item If $X$ is the representation of an irrational
and $Y\geq X$,
then for every $\sigma\prec Y$, there exits
a $\tau\succeq\sigma$ such that
$\tau>X$.

\item For every $s$ that is an initial segment of
a martingale with $dom(s) = 2^{\leq n}$, every martingale $S$
with $S(\rho)<s(\rho)$ for all $\rho\in 2^n$,
there exists a martingale $\h{\S}\succ s$ such that
$\h{\S}(\rho)>\S(\rho)$ for all $\rho \in 2^{\geq n}$.

\item For every $s\prec \S$, there exists a $\delta>0$
such that for every function $\h{\S}$, if
$\S(\rho)+\delta\geq \h{\S}(\rho)\geq \S(\rho)$
for all $\rho\in dom(s)$, then $\h{\S}\succ s$.

\item Suppose $S$ is a martingale with $range(S)\cap \mathbb{Q}=\emptyset$,
and suppose $s$ is an initial segment of some martingale
with $s(\rho)>S(\rho)$ for all $\rho\in 2^n$
where $dom(s) = 2^{\leq n}$; suppose
$\h{\S}\succ s$ is a martingale such that
$\h{\S}(\rho)\geq \S(\rho)$ for all $\rho\in 2{\geq n}$.
Then for every $s\preceq s'\prec \h{\S}$,
there exists a an initial segment $\h{s}\succeq s'$
 of some martingale (not necessarily an initial segment of
 $\h{\S}$) such that $\h{s}(\rho)>S(\rho)$ for all
 $\rho\in 2^{\geq n}\cap dom(\h{s})$.
\end{itemize}

A \emph{condition}
in this section is a pair
$(s,\S)$
such that
\begin{itemize}
\item $s$ is an initial segment of
some martingale with $dom(s) = 2^{\leq n}$.
\item $\S$
is a
 bounded martingale
with $range(\S)
\cap\mathbb{Q}=\emptyset$.

\item For every $\rho \in 2^n$,
$s(\rho)>S(\rho).$
\end{itemize}
We emphasize that there is no complexity requirement
on $\S$.
Again, a condition is regarded as the following set of
martingales:
\begin{align}\nonumber
\{\h{S}\succ s:
\h{\S}(\rho)\geq \S(\rho)
\text{ for all }\rho\in 2^{\geq n}
\}.
\end{align}
This is a set of candidates of
the martingale we construct
and we use $(s,S)$
to denote this set.
A condition
$d'=(s',S')$
\emph{extends}
$d=(s,\S)$
(written as $d'\leq d$)
if
$
d'\subseteq d
$.
Suppose $B_0,B_1,\cdots\subseteq \{0,1\}$ witness
$T$ being homogenous.
The requirements are:
\begin{align}\nonumber
&\mcal{R}_\Psi:
\text{For some }n, \Psi^{\h{S}}(n)\downarrow\notin B_n
\text{ or }
\Psi^{\h{\S}}\text{ is not total}. \\ \nonumber
&\mcal{R}'_n:
\text{There exists }n'>n
\text{ such that }\h{\S}\geq \overline{S}_{n'}.
\end{align}
A condition $d$
\emph{forces} $\mcal{R}_\Psi$
if every $\h{\S}\in d$ satisfy $\mcal{R}_\Psi$.
By our hypothesis on $(\overline{S}_0,\overline{S}_1,\cdots)$,
for every $n$,
every condition $d=(s,S)$,
$d$ admit an extension forcing
$\mcal{R}'_{n}$
(simply consider the condition
$(s,\S+\overline{S}_{n'})$) where
$\overline{S}_{n'}(\bot)$ is small enough.
Therefore, it remains to prove the following:
\begin{lemma}\label{dclem10}
Every condition admit an extension
forcing a given requirement $\mcal{R}_\Psi$.
\end{lemma}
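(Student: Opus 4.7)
The plan is to mirror the structure of Lemma \ref{dclemmain}, splitting the argument into a $\Sigma^0_1$ forcing step and a $\Pi^0_1$ forcing step, with Lemma \ref{dclem8} playing the role of the combinatorial merging device that Lemma \ref{dclem1} played in the discrete-covering proof. Let $d = (s, S)$ be the given condition with $\mathrm{dom}(s) = 2^{\leq n_0}$ and root slack $c_0 := s(\bot) - S(\bot) > 0$. I first search for an initial segment $s' \succeq s$ of a martingale, valid relative to $S$ on its new leaves, such that $\Psi^{s'}(n)\downarrow \notin B_n$ for some $n$; if such an $s'$ exists, the condition $(s', S)$ extends $d$ and forces $\mcal{R}_\Psi$ in a $\Sigma^0_1$ way.

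If this $\Sigma^0_1$ search fails, every converging extending computation of $\Psi$ lies in $T$, and I try to force $\Psi^{\h{S}}$ non-total. For each $n$ and $b \in \{0,1\}$, call $b$ \emph{$n$-committable} if some extension of $d$ has its initial segment witness $\Psi^{s'}(n)\downarrow = b$; by the failure of the $\Sigma^0_1$ step, only values in $B_n$ are committable. The crucial claim, the quantitative analog of Lemma \ref{dclem1}, is that at most one bit is $n$-committable at root slack below $c_0$: given candidate extensions $(s'_0, S'_0), (s'_1, S'_1)$ committing to $0$ and $1$ respectively, I push each along $2^{<\omega}$ to a depth where, by (\ref{dceq3}) and Lemma \ref{dclem4}, the local variances $Var(S'_i|\bot)$ are arbitrarily small, then invoke Lemma \ref{dclem8} to produce a single majorizing martingale $S^{\#}$ with $S^{\#}(\bot) < \max\{S'_0(\bot), S'_1(\bot)\} + \tfrac12(\sqrt{Var(S'_0|\bot)} + \sqrt{Var(S'_1|\bot)}) < s(\bot)$. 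The resulting condition $(s, S^{\#})$ lies below both $(s'_i, S'_i)$, so any $\h{S}$ in it would have to satisfy $\Psi^{\h{S}}(n)=0$ and $\Psi^{\h{S}}(n)=1$ simultaneously; the only way out is for $(s, S^{\#})$ to admit a further extension forcing $\Psi^{\h{S}}(n)\uparrow$.

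If both the $\Sigma^0_1$ and $\Pi^0_1$ attempts fail, then at each $n$ exactly one bit $b_n \in B_n$ is $n$-committable. Since committability is $\Sigma^0_1$ modulo the fixed finite parameters of $d$, the map $n \mapsto b_n$ is computable, producing an infinite computable path through $T$ and contradicting the hypothesis on $T$. Hence one of the two attempts must succeed, giving the desired extension.

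The main obstacle will be the quantitative bookkeeping in the merging step: the merged martingale $S^{\#}$ must respect the slack $c_0$ at $\bot$ \emph{and} the pointwise majorization over the entire tree that the condition definition in Section \ref{dcsec1} requires, while reconciling two incompatible initial segments $s'_0, s'_1$ by merging only the $S$-data and keeping the initial segment anchored at $s$. The flexibility afforded by pushing candidate extensions to arbitrary depth --- where (\ref{dceq3}) drives local variance to zero --- is what keeps the merging feasible, and is precisely where the bounded-variance hypothesis implicit in Theorem \ref{dcth0}'s bounded-martingale setting is used.
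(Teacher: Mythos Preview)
Your overall architecture---a Liu-style dichotomy, with Lemma~\ref{dclem8} as the merging device---matches the paper, but the quantifiers in your merging step are reversed, and this is fatal. You define ``$b$ is $n$-committable'' to mean \emph{some} extension $(s'_b,S'_b)$ has $\Psi^{s'_b}(n)\downarrow=b$; the commitment to the value $b$ lives in the finite initial segment $s'_b$, not in $S'_b$. When you merge the $S$-data into $S^{\#}$ and anchor the condition back at $s$, you discard $s'_0$ and $s'_1$, so nothing forces a member $\h S\in(s,S^{\#})$ to extend either $s'_i$; hence $(s,S^{\#})$ is \emph{not} below either $(s'_i,S'_i)$, and there is no contradiction ``$\Psi^{\h S}(n)=0$ and $=1$'' to exploit. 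The paper works with the dual notion: it defines $\Pi^0_1$ classes $Q_{m,i}$ of martingales $S'$ such that \emph{every} $\h S\in(s,S')$ satisfies $\Psi^{\h S}(m)\ne i$. If both $Q_{m^*,0}$ and $Q_{m^*,1}$ are nonempty, picking $S_i\in Q_{m^*,i}$ and merging $S,S_0,S_1$ via Lemma~\ref{dclem8} into $S^*$ gives $(s,S^*)\subseteq(s,S_0)\cap(s,S_1)$, which genuinely forces $\Psi^{\h S}(m^*)\uparrow$.

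A second gap: your final computability claim fails because ``committability'' is a statement about extensions of $(s,S)$, and $S$ is not computable. The paper handles this by first applying Lemma~\ref{dclem4} to reserve computable slack $v(\rho),\delta$ on $2^n$, then defining $Q_{m,i}$ using only the finite data $s,v,\delta$ (item~(2) bounds $S'(\rho)$ and $\sqrt{Var(S'|\rho)}$ by these computable quantities, and item~(3) is $\Pi^0_1$ in $S'$ alone). This makes $Q_{m,i}$ a genuine $\Pi^0_1$ class uniform in $m,i$, so the partial function $g(m)=$ first $i$ with $Q_{m,i}=\emptyset$ is partial computable; since $T$ has no computable path, either $g(m^*)\notin B_{m^*}$ (Case~1, your $\Sigma^0_1$ outcome) or $g(m^*)\uparrow$ (Case~2, both $Q_{m^*,i}$ nonempty, your $\Pi^0_1$ outcome). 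Your variance-pushing idea is not needed: the slack $3/2\cdot v(\rho)$ reserved up front is exactly what Lemma~\ref{dclem8} requires to merge the three martingales $S,S_0,S_1$ at level $n$.
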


Using Lemma \ref{dclem10}, we can build
a sequence of  conditions $d_0\geq d_1\geq \cdots$
where $d_t=(s_t,S_t)$ such that every
requirement is forced by one of these conditions.
By compactness, $G=\cup_t s_t$ is
a martingale contained in all conditions and
 thus satisfy all requirements.

\begin{proof}

Fix a condition $(s,S)$.
The basic idea of this proof follows
a proof separating $\msf{WKL}$
from $\msf{RT}_2^2$
 \cite{Liu2012RT22},
 illustrated as following.
If for some $m^*$, $B_{m^*}=\{i^*\}$ and no condition 
can force $\neg(\Psi^{\h{S}}(m^*)\downarrow= 1-i^*)$, then in particular, 
$(s,S)$ can not force this. We  show that in this case
we can extend $s$ to $\h{s}$
(which does not violate the condition $(s,S)$) so that $\Psi^{\h{s}}(m^*)\downarrow
=1-i^*$.
If  for each $i\in \{0,1\}$,
there is a condition $(s,S_i)$ forcing $
\neg(\Psi^{\h{S}}(m^*)\downarrow= i)$, then
we consider the three condition $(s,S),(s,S_0), (s,S_1)$
and show that there exists a condition $(s,S^*)$
extending all of them
(simply let $S^*$ be a martingale majorizing $S,S_0,S_1$ which exists 
by Lemma \ref{dclem8}). 
Note that for every $\h{S}\in (s,S^*)$, $\Psi^{\h{S}}(m^*)\uparrow$
since otherwise, either $\Psi^{\h{S}}(m^*)\downarrow=0$
or $\Psi^{\h{S}}(m^*)\downarrow=1$.

Suppose
$dom(s) = 2^{\leq n}$.
By Lemma \ref{dclem4},
we  may assume that, for some $0<\delta\in\mathbb{Q}$,
for
every $\rho\in 2^n$,
\begin{align}\label{dceq4}
s(\rho)>
\S(\rho)+3/2\cdot
v(\rho)+\delta.
\end{align}
where $\mathbb{Q}\ni v(\rho)>\sqrt{Var(\S|\rho)}$
for all $\rho\in 2^n$.
For every $m\in\omega$,
every $i\in 2$,
consider the following  class
$Q_{m,i}$ such that
$\S'\in Q_{m,i}$ if and only if:
\begin{enumerate}
\item $\S'$ is a martingale;
\item $\S'(\rho)\leq s(\rho)-3/2\cdot v(\rho)-\delta$
and $\sqrt{Var(\S'|\rho)}\leq
v(\rho)$ for all $\rho\in 2^n$;
 \item
 For  every $\h{S}\in (s,\S')$,
$\Psi^{\h{\S}}(m)\downarrow\rightarrow
\Psi^{\h{\S}}(m)\ne i$.
\end{enumerate}
Note that $Q_{m,i}$
is a $\Pi_1^0$ class uniformly in
$m,i$.
Consider the following partial computable
function:
$$g(m) = \text{
the first $i\in 2$
which is found that $Q_{m,i}[t]=\emptyset$
for some
}t.$$
Since $T$ does not admit a computable infinite path,
one of the following two cases occurs.

\ \\

\textbf{Case 1}:
There exists an $m^*\in\omega$, $g(m^*)\notin B_{m^*}$.

This means
$Q_{m^*,i^*}=\emptyset$
where $g(m^*) = i^*\notin B_{m^*}$.
In this case,
in particular, $
\S\notin Q_{m^*,i^*}$
while
$\S$
satisfy the first two items of $Q_{m^*,i^*}$.
By definition of $Q_{m,i}$,
there exists a martingale $\h{S}\succ s$
such that
\begin{itemize}
\item $\h{\S}(\rho)\geq
\S(\rho)$ for all $\rho\in 2^{\geq n}$;
\item $\Psi^{\h{\S}}(m^*)\downarrow=i^*$.
\end{itemize}
Since $range(\S)\cap \mathbb{Q}=\emptyset$,
there exists an initial segment $\h{s}\succeq s$
of some martingale such that
$\h{s}(\rho)>\S(\rho)$ for all $\rho\in 2^{\geq n}\cap dom(\h{s})$
(which means $(\h{s},\S)$
is a condition extending $(s,\S)$);
  and $\Psi^{\h{s}}(m^*)\downarrow=i^*\notin B_{m^*}$.
Thus we are done in this case.

\ \\

\textbf{Case 2}:
There exists an $m^*$ such that
$g(m^*)\uparrow$.

This means
$Q_{m^*,0},Q_{m^*,1}\ne\emptyset$.
Fix
$S_i\in Q_{m^*,i}$
for $i\in 2$
(not necessarily low).
Consider the sequence
$\S,S_0,S_1$.
By (\ref{dceq4}) and the definition of $Q_{m,i}$,
for every $\rho\in 2^n$,
$$s(\rho)>
\max\{\S(\rho),S_0(\rho),S_1(\rho)\}
+\frac{1}{2}
(\sqrt{Var(\S|\rho)}+
\sqrt{Var(S_0|\rho)}+\sqrt{Var(S_1|\rho)}).$$
By Lemma \ref{dclem8},
there exists
a martingale $S^*$ such that
\begin{itemize}
\item $S^*$ is a boundeded martingale
with $range(S^*)\cap\mathbb{Q}=\emptyset$;
\item $S^*(\rho)<s(\rho)$ for all $\rho\in 2^n$;
\item $S^*(\rho)> \max\{\S(\rho),S_0(\rho),S_1(\rho)\}$
for all $\rho\in 2^{\geq n}$.
\end{itemize}
Thus the pair
$(s,S^*)$ is a condition.
Clearly
$(s,S^*)$ extends $(s,\S)$
since $S^*(\rho)
\geq \max\{\S(\rho),\S_0(\rho),\S_1(\rho)\}
\geq \S(\rho)$
for all $\rho\in 2^{\geq n}$.
It remains to show that $(s,S^*)$
forces $\mcal{R}_\Psi$.
For every $\h{S}\in (s,S^*)$,
we have that $\h{S}\succ s$ and
$\h{\S}(\rho)\geq \S^*(\rho)\geq S_i(\rho)$ for all $\rho\in 2^{\geq n}$
(which means $\h{S}\in (s,\S_i)$).
By definition of $Q_{m,i}$,
we have that $\Psi^{\h{\S}}(m^*)\downarrow\rightarrow
\Psi^{\h{\S}}(m^*)\notin \{0,1\}$.
Which means $\Psi^{\h{\S}}(m^*)\uparrow$.
Thus we are done.

\end{proof}

\bibliographystyle{amsplain}
\bibliography{bibliographylogic}

\end{document}